\crefname{hypothesis}{Hypothesis}{Hypotheses}
\title{An ideal-sparse generalized moment problem reformulation for completely positive tensor decomposition exploiting maximal cliques of multi-hypergraphs \thanks{Submitted to the editors DATE.
\funding{This work was funded by the National Natural Science Foundation of China (12401405).}}}
\author{Pengfei Huang\thanks{School of Mathematics, Hunan University, Changsha, Hunan 410082, China 
		(\email{huangpf@hnu.edu.cn}).}
	\and Minru Bai\thanks{School of Mathematics, Hunan University, Changsha, Hunan 410082, China 
		(\email{minru-bai@163.com}).}
}
\begin{document}

\maketitle

\begin{abstract}
In this paper, we consider the completely positive tensor decomposition problem with ideal-sparsity. First, we propose an algorithm to generate the maximal cliques of multi-hypergraphs associated with completely positive tensors. This also leads to a necessary condition for tensors to be completely positive. Then, the completely positive tensor decomposition problem is reformulated into an ideal-sparse generalized moment problem. It optimizes over several lower dimensional measure variables supported on the maximal cliques of a multi-hypergraph. The moment-based relaxations are applied to solve the reformulation. The convergence of this ideal-sparse moment hierarchies is studied. Numerical results show that the ideal-sparse problem is faster to compute than the original dense formulation of completely positive tensor decomposition problems. It also illustrates that the new reformulation utilizes sparsity structures that differs from the correlative and term sparsity for completely positive tensor decomposition problems.
\end{abstract}

\begin{keywords}
Completely positive tensor, moment problem, sparsity, multi-hypergraph, semidefinite programming
\end{keywords}

\begin{MSCcodes}
15A69, 44A60, 90C22
\end{MSCcodes}

\section{Introduction}
Let $m$ and $n$ be positive integers. An $m$th order $n$-dimensional tensor $\mathcal{A}$ is an array indexed by integer tuples $(i_1,i_2,\cdots,i_m)$ with $1\le i_1,i_2,\cdots,i_m\le n$, i.e.,
\[\mathcal{A}=(\mathcal{A}_{i_1,i_2,\cdots,i_m})_{1\le i_1,i_2,\cdots,i_m\le n}.\]
Let $T^m(\mathbb{R}^n)$ be the set of all such real tensors. A tensor $\mathcal{A}\in T^m(\mathbb{R}^n)$ is symmetric if each entry $\mathcal{A}_{i_1,i_2,\cdots,i_m}$ is invariant with respect to all permutations of $(i_1,i_2,\cdots,i_m)$. Let $S^m(\mathbb{R}^n)$ be the set of all symmetric tensors. A symmetric tensor $\mathcal{A}\in S^m(\mathbb{R}^n)$ is completely positive \cite{qi2014nonnegative}, if it has a decomposition of the following form: 
\begin{equation}\label{equ:cp}
	\mathcal{A}=\sum_{r=1}^{R}(\mathbf{v}^k)^{\otimes m},
\end{equation}
where  $\mathbf{v}^1,\cdots,\mathbf{v}^R$ are nonnegative vectors. $\mathbf{v}^{\otimes m}$ denotes the $m$th order n-dimensional symmetric outer product tensor such that 
\[(\mathbf{v}^{\otimes m})_{i_1,i_2,\cdots,i_m}=\mathbf{v}_{i_1}\mathbf{v}_{i_2}\cdots\mathbf{v}_{i_m}.\]
We consider the completely positive tensor decomposition problem that for a symmetric tensor $\mathcal{A}$ whether it is completely positive or not.

Completely positive tensor decompositions have applications in computer vision, blind source separation and multiway clustering \cite{shashua2005non,panagakis2021tensor,cichocki2009nonnegative,he2010detecting}. Given the length $R$ of the decomposition, finding a completely positive decomposition can be formulated as an approximation problem with nonnegativity constraints \cite{lim2009nonnegative}. The interested readers can refer to for instance \cite{kolda2015numerical,che2020multiplicative} about the numerical algorithms about this least square problem. For some special classes of tensors, methods are presented to check their completely positivity. For instance, Qi et al. \cite{qi2014nonnegative} proposed a hierarchyical elimination algorithm to obtain a completely positive decomposition of strongly symmetric hierarchically dominated nonnegative tensors. Fan and Zhou \cite{fan2017semidefinite} charaterized the completely positive tensors as a truncated moment sequence. In this way, the problem of checking whether a real symmetric tensor is completely positive is transformed to a truncated moment problem. If it is not completely positive, they can get a certificate for it; if it is, a completely positive decomposition can be obtained. However, a hiarachy of semidefinite relaxations were applied to solve this moment optimization problem. It involves matrices whose size grows very quickly with the relaxation level, dimension and order of tensors, which are usually computationally expensive.
 
To overcome the scalability issue in the hierarchy of semidefinite relaxations, different kinds of sparsity structures are exploited. The first kind is called correlative sparsity, when there are few correlations between the variables of polynomials \cite{waki2006sums}. The second kind is called term sparsity, when there are few monomial terms involved in polynomials \cite{wang2020second}. Both kinds of sparsity can be combined \cite{wang2022cs}. Magron and Wang have developed the software TSSOS that implements correlative and term sparse sum of squares relaxations for the polynomial optimization \cite{magron2021tssos}. We refer to \cite{magron2023sparse} for a nice survey. Recently, Korda et al. \cite{korda2024exploiting} introduced the ideal-sparsity structure, when one optimizes over a measures supported on the variety of an ideal generatedy by quadratic bilinear monomials $x_ix_j$. Based on this, a hierarchies of ideal-sparse moment relaxations is proposed, which is much faster to compute. 

Motivated by the ideal-sparsity generated by quadratic monomials and the zero entry dominance property of completely positive tensor, we aim to construct an ideal-sparse reformulation of the truncated moment problem with ideals generated by $m$th order monomials $x_{i_1}x_{i_2}\cdots x_{i_m}$. To this end, we introduce the maximal clique for multi-hypergraphs associated with completely positive tensors, whose nonedges corresponding to $x_{i_1}x_{i_2}\cdots x_{i_m}=0$. Then, the hierarchies of ideal-sparse moment relaxations can be applied for completely positive tensor decomposition problems.

This paper is organized as follow. Some preliminaries for polynomial optimization and ideal-sparse generalized moment problems are given in \cref{sec:preliminaries}. In \cref{sec:clique}, the maximal cliques of multi-hypergraphs associated with completely positive tensors are studies. The maximal clique generation algorithm is proposed, and a necessary condition for tensors to be completely positive is presented. Section \ref{sec:ideal} studies the ideal-sparse generalized moment problem reformulation and resulted sparse moment relaxations for the completely positive tensor decomposition problem. The aymptotic and finite convergence are also discussed in this section. Numerical experiments are presented in \cref{sec:numerical}, and we conclude our approach and discuss future work in \cref{sec:conclusion}.

\section{Preliminaries}\label{sec:preliminaries}
\subsection{Notation}
We denote scalars, vectors, and tensors by lowercase letters, bold lowercase letters, and calligraphic letters, respectively. For a tensor $\mathcal{A}\in T^m(\mathbb{R}^n)$, we also use $\mathcal{A}_{i_1,\cdots,i_m}$ to denote its entry. For a symmetric matrix $X$, $X\succeq 0$ means $X$ is positive semidefinite. $tr(X)$ means the trace of $X$. $\|\cdot\|$ denotes the 2-norm. For $t\in\mathbb{R}$, $\lceil t\rceil$ dentoes the smallest integer not smaller than $t$. Let $\mathbf{x}=(x_1,\cdots,x_n)$ be a tuple of varialbes. $\mathbb{R}[\mathbf{x}]$ (resp., $\mathbb{R}[\mathbf{x}]_d$) denotes the set of polynomials in $\mathbf{x}$ with real coefficients (with degree most $d$). Set $\mathbb{N}^n_d=\{\bm{\alpha}\in\mathbb{N}^n:|\bf{\alpha}|\le d\}$, where $|\bm{\alpha}|=\sum_{i=1}^n\alpha_i$. Denote by $\text{deg}(p)$ the degree of a polynomial $p$. For  $\bm{\alpha}=(\alpha_1,\cdots,\alpha_n)\in\mathbb{N}^n$, denote the monomial power $\mathbf{x}^{\bm{\alpha}}:=x_1^{\alpha_1}\cdots x_n^{\alpha_n}$. We use $[\mathbf{x}]_d$ to denote the vector of all monomials in $\mathbf{x}$ whose degree is at most $d$ (listed in some given order). Let $\mathcal{A}\in S^m(\mathbb{R}^n)$, denote by $\mathbb{R}[\mathbf{x}]_{\mathcal{A}}$ the set of polynomials with monomials whose exponents are from $\{\bm{\alpha}\in\mathbb{N}^m_n:\bm{\alpha}=\mathbf{e}_{i_1}+\mathbf{e}_{i_2}+\cdots+\mathbf{e}_{i_m}, 1\le i_1\le\cdots\le i_m\le n\}$ and $\mathbf{e}_i$ is the $i$-th unit vector in $\mathbb{R}^n$. 

A polynomial $p\in\mathbb{R}[\mathbf{x}]$ is said to be a sum of squares (SOS) if there exists $\sigma_1,\cdots,\sigma_t\in\mathbb{R}[\mathbf{x}]$ such that $p=(\sigma_1)^2+\cdots+(\sigma_t)^2$. Denote by $\Sigma[\mathbf{x}]$ the set of SOS polynomials in $\mathbf{x}$, and let $\Sigma[\mathbf{x}]_d:=\Sigma[\mathbf{x}]\cap\mathbb{R}[\mathbf{x}]_d$. Given a tuple $\bm{g}=(g_1,\cdots,g_m)\subseteq\mathbb{R}[\mathbf{x}]$, the quadratic module of $\mathbb{R}[\mathbf{x}]$ generated by $\bm{g}$ is the set
\[Q(\bm{g}):=\Sigma[\mathbf{x}]+g_1\cdot\Sigma[\mathbf{x}]+\cdots+g_m\cdot\Sigma[\mathbf{x}],\]
and the $2d$th truncation of $Q(\bm{g})$ is the set
\begin{equation*}\label{equ:qmod}
	Q(\bm{g})_{2d}:=\Sigma[\mathbf{x}]_{2d}+g_1\cdot\Sigma[\mathbf{x}]_{2d-\text{deg}(g_1)}+\cdots+g_m\cdot\Sigma[\mathbf{x}]_{2d-\text{deg}(g_m)}.
\end{equation*}
For a tuple $\bm{h}=(h_1,\cdots,h_l)\subseteq\mathbb{R}[\mathbf{x}]$, the ideal of $\mathbb{R}[\mathbf{x}]$ generated by $\bm{h}$ is the set
\[I(\bm{h}):=h_1\cdot\mathbb{R}[\mathbf{x}]+\cdots+h_l\cdot\mathbb{R}[\mathbf{x}],\]
and the $2d$th truncation of $I(\bm{h})$ is the set
\begin{equation*}\label{equ:ideal}
	I(\bm{h})_{2d}:=h_1\cdot\mathbb{R}[\mathbf{x}]_{2d-\text{deg}(h_1)}+\cdots+h_l\cdot\mathbb{R}[\mathbf{x}]_{2d-\text{deg}(h_1)}. 
\end{equation*}

Denote $[n]:=\{1,\cdots,n\}$ and let $[\{j_1,\cdots,j_m\}]$ denote the set of all distinct elements in $\{j_1,\cdots,j_m\}$. Consider a set $U\subset[n]$, $|U|$ means the number of elements in $U$, let $\mathbf{x}(U)=\{x_i:i\in U\}$. For a vector $\mathbf{y}\in\mathbb{R}^{|U|}$, $\hat{\mathbf{y}}\in\mathbb{R}^n$ is defined as $\hat{y}_i=y_i$ if $i\in U$ and $\hat{y}_i=0$ otherwise. The support of a vector $\mathbf{x}\in\mathbb{R}^n$ is the set Supp$(\mathbf{x})=\{i\in[n]: x_i\neq 0\}$. For a function $f:~\mathbb{R}^n\rightarrow\mathbb{R}$, we let $f|_{U}:\mathbb{R}^{|U|}\rightarrow\mathbb{R}$ denote the function in the variables $\mathbf{x}(U)$, defined as $f|_{U}(\mathbf{x}(U))=f(\widehat{\mathbf{x}(U)})$.

\subsection{The $\mathcal{A}$-truncated moment problem}
For a tensor $\mathcal{A}\in S^m(\mathbb{R}^n)$, let $d>m/2$, the completely positive tensor decomposition problem can be transformed into the following linear optimization problem \cite{fan2017semidefinite,nie2014truncated}
\begin{equation}\label{equ:dense GMP}
	\begin{aligned}
		val:=\underset{\mu\in\mathcal{M}(\mathbb{R}^n)}{\min}&\int Fd{\mu}\\
		\mbox{s.t.}&\int \mathbf{x}^{\bm{\alpha}}d{\mu}=\mathcal{A}_{i_1,i_2,\cdots,i_m}~(1\le i_1\le\cdots\le i_m\le n)\\
		&\text{Supp}(\mu)\subset K,
	\end{aligned}
\end{equation}
where $K=\{\mathbf{x}\in\mathbb{R}^n: \mathbf{x}^T\mathbf{x}-1=0,x_1\ge 0,x_2\ge0,\cdots,x_n\ge0\}.$ $\mathcal{M}(\mathbb{R}^n)$ is the set of nonnegative Borel measure on $\mathbb{R}^n$, $F\in \mathbb{R}[\mathbf{x}]_d$ is a random SOS polynomial in variables $\mathbf{x}=(x_1,\cdots,x_n)$ and $\bm{\alpha}=\mathbf{e}_{i_1}+\mathbf{e}_{i_2}+\cdots+\mathbf{e}_{i_m}$. 

Let $\mathbb{R}^{\mathbb{N}^n_d}$ be the set of real vectors indexed by $\bm{\alpha}\in\mathbb{N}^n_d$. A vector in $\mathbb{R}^{\mathbb{N}^n_d}$ is called a truncated multisequence (tms) of degree $d$. For $F\in\mathbb{R}[\mathbf{x}]_d$ and $\mathbf{z}\in\mathbb{R}^{\mathbb{N}^n_d}$, define the scalar product
\[\langle\sum_{\bm{\alpha}\in\mathbb{N}^n_d}F_{\bm{\alpha}}\mathbf{x}^{\bm{\alpha}},\mathbf{z}\rangle:=\sum_{\alpha\in\mathbb{N}^n_d}F_{\bm{\alpha}}{z}_{\bm{\alpha}},\]
where each $F_{\bm{\alpha}}$ is a coefficient of $F$. Then, \cref{equ:dense GMP} is equivalent to a $\mathcal{A}$-truncated moment problem \cite{nie2014truncated}
\begin{equation}\label{equ:truncated}
	\begin{aligned}
		val:=\underset{\mathbf{z}\in\mathbb{R}^{\mathbb{N}^n_d}}{\min}&\langle F,\mathbf{z}\rangle \\
		\mbox{s.t.}& z_{\bm{\alpha}}=\mathcal{A}_{i_1,i_2,\cdots,i_m}~(1\le i_1\le\cdots\le i_m\le n)\\
		&\mathbf{z}\in \mathcal{R}_d,
	\end{aligned}
\end{equation}
where 
\[\mathcal{R}_d:=\left\{\mathbf{z}\in\mathbb{R}^{\mathbb{N}^n_d}:\exists \mu\in\mathcal{M}(\mathbb{R}^n), \text{Supp}(\mu)\subset K \text{ such that }z_{\bm{\alpha}}=\int\mathbf{x}^{\bm{\alpha}}d\mu \text{ for } \bm{\alpha}\in\mathbb{N}^{n}_d\right\}.\]
We say that $\mathbf{z}$ admits a representing measure supported in a set $K$ if $\mathbf{z}\in\mathcal{R}_d$. A mesture is finitely atomic if its support is a finite set and is $r$-atomic if its support consists of at most $r$ distinct points. Hence, a tensor $\mathcal{A}$ is completely positive if and only if there exists $\mathbf{z}\in\mathcal{R}_d$ admits a finitely atomic measure. In fact, there exist $\lambda_1,\cdots,\lambda_R>0$ and nonnegative vectors $\mathbf{u}^1,\cdots,\mathbf{u}^R$ such that 
\[\mathcal{A}=\lambda_1(\mathbf{u}^1)^{\otimes m}+\cdots+(\mathbf{u}^R)^{\otimes m}\]
 if and only if $\mathbf{z}\in\mathcal{R}_d$ admits a measure
  \[\mu=\lambda_i\delta_{\mathbf{u}^i},\]
 where $\delta_{\mathbf{u}}$ is the dirac measure.

\subsection{Dense moment relaxations}

Since the optimization problem \cref{equ:truncated} is difficult to deal with directly, a common strategy is to build hierarchies of moment approximations for it. 

For a polynomial $q\in\mathbb{R}[\mathbf{x}]_{2t}$, the $t$th localizing matrix of $q$, generated by a tms $\mathbf{z}\in\mathbb{R}^{\mathbb{N}^n_{2t}}$, is a symmetric matrix $L_q^{(t)}(\mathbf{z})$ such that
$\langle qp^2,\mathbf{z}\rangle=vec(p)^T(L_q^{(t)}(\mathbf{z}))vec(p)$ for all $p\in\mathbb{R}[\mathbf{x}]_{t-\lceil\text{deg}(q)/2\rceil}$, where $vec(p)$ is the coefficient vector of $p$. That is, \[L_q^{(t)}(\mathbf{z})=\left(\sum_{\bm{\alpha}}q_{\bm{\alpha}}y_{\bm{\alpha}+\bm{\beta}+\bm{\gamma}}\right)_{\bm{\beta},\bm{\gamma}\in\mathbb{N}^n_{t-\lceil\text{deg}(q)/2\rceil}}.\]
And the moment matrix is denoted as
\[M_t(\mathbf{z}):=L_1^{(t)}(\mathbf{z}).\]

The moment relaxation of level $t$ for problem \eqref{equ:truncated} is
\begin{equation}\label{equ:dense relax}
	\begin{aligned}
	\xi_t:=\underset{{\mathbf{z}}\in\mathbb{R}^{\mathbb{N}^n_{2t}}}{\min}&\langle F,\mathbf{z}\rangle\\
	\mbox{s.t.}&{z}_{\bm{\alpha}}=\mathcal{A}_{i_1,i_2,\cdots,i_m}~(1\le i_1\le\cdots\le i_m\le n)\\
	&M_t(\mathbf{z})\succeq 0\\
	&L_{x_j}^{(t)}(\mathbf{z})\succeq 0~(j\in[n])\\
	&L_{\mathbf{x}^T\mathbf{x}-1}^{(t)}(\mathbf{z})=0.
	\end{aligned}
\end{equation}
The dual optimization problem of \cref{equ:dense relax} is 
\begin{equation}\label{equ:dense dual}
	\begin{aligned}
		\underset{P\in\mathbb{R}[\mathbf{x}]_{\mathcal{A}}}{\max}&\langle P,\mathbf{w}\rangle\\
		\text{s.t.}&F-P\in I((\mathbf{x}^T\mathbf{x}-1))_{2t}+Q((x_1,\cdots,x_n))_{t},
	\end{aligned}
\end{equation}
where $\mathbf{w}$ is given as $w_{\bm{\alpha}}=\mathcal{A}_{i_1,\cdots,i_m}$ for any $\bm{\alpha}=\mathbf{e}_{i_1}+\mathbf{e}_{i_2}+\cdots+\mathbf{e}_{i_m}$.
We refer to the above hierarchy of parameter $\xi_t$ as dense moment hierarchy. It is easy to see that $\xi_t\le\xi_{t+1}\le\xi_{\infty}\le$val. We refer the reader to \cite{nie2023moment} for more references about moment and polynomial optimizations.

Due to the work of Curto and Fialkow \cite{curto1996solution,curto2000truncated}, a finitely atomic optimal solution $\mu$ of \cref{equ:dense GMP} can be extracted from tms $\mathbf{z}$ the solution of \cref{equ:dense relax} under the flatness condition.
\begin{theorem}[\cite{curto1996solution,curto2000truncated}]\label{thm:flat}
	Let $t\in\mathbb{N}$ such that $t\ge t_0=\max\{\lceil \text{deg}(F)/2\rceil,\lceil m/2\rceil,1\}$ and set $d_K=\max\{1,\lceil \text{deg}(x_j)/2\rceil~(j\in[n]),\lceil\text{deg}(\mathbf{x}^T\mathbf{x}-1)/2\rceil\}$. Assume $\mathbf{z}\in\mathbb{R}^{\mathbb{N}^n_{2t}}$ is an optimal solution to the moment relaxation of lever $t$ \cref{equ:dense relax} and it satisfies the following flatness condition:
	\begin{equation}\label{equ:rank condition}
		rank~M_s(\mathbf{z})=rank~M_{s-d_K}(\mathbf{z}):=r
	\end{equation} 
for some integer $s$ such that $t_0\le s\le t$. Then equality $\xi_t=$val holds, and \eqref{equ:dense GMP} has an optimal solution $\mu$ that is $r$-atomic. 
\end{theorem}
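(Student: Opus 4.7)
The plan is to invoke the Curto--Fialkow flat extension theorem on the truncated moment sequence $\mathbf{z}$ restricted to degree $2s$. The flatness condition \cref{equ:rank condition} supplied in the hypothesis is precisely what Curto and Fialkow require in order to guarantee that $\mathbf{z}|_{2s}$ admits a unique flat extension to every higher degree, and that this extension corresponds to an $r$-atomic nonnegative Borel measure $\mu$ on $\mathbb{R}^n$, whose atoms are the common zeros of the kernel of $M_s(\mathbf{z})$ with positive weights summing to $z_0$.

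First I would verify that the truncation level $s$ is large enough to encode both the objective and the data constraints of \cref{equ:dense GMP}. Since $s\ge t_0\ge \lceil\text{deg}(F)/2\rceil$ and $s\ge\lceil m/2\rceil$, the objective integral $\int F\,d\mu$ and every moment condition $\int\mathbf{x}^{\bm\alpha}\,d\mu=\mathcal{A}_{i_1,\dots,i_m}$ are already determined by $\mathbf{z}|_{2s}$ and coincide with the corresponding entries of $\mathbf{z}$. Next, I would use the semidefinite and equality constraints on the localizing matrices $L_{x_j}^{(t)}(\mathbf{z})$ and $L_{\mathbf{x}^T\mathbf{x}-1}^{(t)}(\mathbf{z})$ to pin down the support of $\mu$. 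Since $s\ge d_K$, their compressions $L_{x_j}^{(s)}(\mathbf{z})\succeq 0$ and $L_{\mathbf{x}^T\mathbf{x}-1}^{(s)}(\mathbf{z})=0$ are inherited. A standard Curto--Fialkow localization argument then forces each atom of $\mu$ to satisfy $x_j\ge 0$ for all $j\in[n]$ and $\mathbf{x}^T\mathbf{x}=1$, so $\text{Supp}(\mu)\subset K$. Hence $\mu$ is feasible for \cref{equ:dense GMP}.

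Finally, combining feasibility with $\int F\,d\mu=\langle F,\mathbf{z}\rangle=\xi_t$ yields $\text{val}\le\xi_t$; together with the weak-duality chain $\xi_t\le\xi_{\infty}\le\text{val}$ recorded just before the theorem, this forces $\xi_t=\text{val}$ and shows that $\mu$ is an $r$-atomic optimizer of \cref{equ:dense GMP}.

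The main obstacle is the support-characterization step: one must carefully justify that $\succeq 0$ (resp.\ $=0$) of the level-$t$ localizing matrices restricts appropriately to the level-$s$ submatrices built from the flat extension, and then invoke the localization/Positivstellensatz lemma of Curto--Fialkow to conclude that every atom of $\mu$ lies in the semialgebraic set $K$. The requirement $s\ge d_K$ in the definition of the flatness condition is exactly what makes this localization argument go through for the defining polynomials $x_j$ and $\mathbf{x}^T\mathbf{x}-1$; everything else is bookkeeping.
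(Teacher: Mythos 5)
The paper does not supply its own proof of this statement; it is quoted directly from the cited Curto--Fialkow references, and your argument is exactly the standard flat-extension proof those references give: flatness \cref{equ:rank condition} yields an $r$-atomic representing measure for $\mathbf{z}|_{2s}$, the inherited level-$s$ localizing constraints place its atoms in $K$ so that $\mu$ is feasible for \cref{equ:dense GMP}, and combining $\mathrm{val}\le\int F\,d\mu=\langle F,\mathbf{z}\rangle=\xi_t$ with the weak-duality inequality $\xi_t\le\mathrm{val}$ closes the gap. Your proposal is correct and coincides with the intended justification.
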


For a tms $\mathbf{z}\in\mathbb{R}^{\mathbb{N}^n_t}$, denote by $\mathbf{z}|_s$ the subvector of $\mathbf{z}$ whose indices belong to $\mathbb{N}^n_s$. Now let us recall the theorem that guarantees the achievement of checking whether a symmetric tensor $\mathcal{A}$ is completely positive and extracting a decomposition if it is.
\begin{theorem}[\cite{nie2014truncated,fan2017semidefinite}]\label{thm:nie}
		Let $\mathcal{A}\in S^m(\mathbb{R}^n)$. The moment hierarchies \cref{equ:dense relax} has the following properties:
	\begin{enumerate}
		\item If \cref{equ:dense relax} is infeasible for some $t$, then $\mathcal{A}$ is not completely positive.
		\item If $\mathcal{A}$ is not completely positive, then \cref{equ:dense relax} is infeasible for all $t$ big enough.
		\item If $\mathcal{A}$ is completely positive, then for almost all generated $F$, \cref{equ:dense relax} has an optimizer $\mathbf{z}^{*,t}$ for all $t>\text{m}/2$. 
		\item For all $s$ big enough, the sequence $\{\mathbf{z}^{*,t}|_{2s}\}_t$ is bounded and all its accumulation points are flat.
		\item Under mild conditions, \cref{equ:dense relax} has finite convergence, that is $\mathbf{z}^{*,t}|_{2s}$ is flat for some $s> m/2$ for $t$ big enough.
	\end{enumerate}
\end{theorem}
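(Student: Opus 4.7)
The plan is to specialize the general truncated moment machinery of \cite{nie2014truncated,fan2017semidefinite} to our feasible set $K=\{\mathbf{x}\in\mathbb{R}^n:\mathbf{x}^T\mathbf{x}-1=0,\,x_i\ge 0\}$. The decisive structural fact is that $K$ is compact, so the quadratic module $Q((x_1,\ldots,x_n))+I(\mathbf{x}^T\mathbf{x}-1)$ is archimedean, which activates Putinar's Positivstellensatz and the Curto--Fialkow flat extension theorem \cref{thm:flat}. Parts (1) and the feasibility half of (3) are direct: given a completely positive decomposition $\mathcal{A}=\sum_r(\mathbf{v}^r)^{\otimes m}$, rescale as $\mathbf{v}^r=\|\mathbf{v}^r\|\mathbf{u}^r$ with $\|\mathbf{u}^r\|=1$ and set $\mu=\sum_r\|\mathbf{v}^r\|^m\delta_{\mathbf{u}^r}$; this measure is supported in $K$ and its moment vector is feasible for \cref{equ:dense relax} at every level. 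The existence of an optimizer follows because the feasible spectrahedron is bounded (see next paragraph) and $\langle F,\mathbf{z}\rangle$ is continuous and bounded below when $F$ is SOS.

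For (2) and (4), I would argue by contrapositive via a single diagonal-extraction scheme. Pick feasible $\mathbf{z}^{(t)}$ for each $t$. The constraint $L_{\mathbf{x}^T\mathbf{x}-1}^{(t)}(\mathbf{z}^{(t)})=0$ together with $M_t(\mathbf{z}^{(t)})\succeq 0$ imply uniform bounds on $\mathbf{z}^{(t)}|_{2s}$ for each fixed $s$: taking moments of the identity $(\mathbf{x}^T\mathbf{x})^k=1$ expresses $z_0^{(t)}$ as a fixed linear combination of entries already pinned to entries of $\mathcal{A}$ by the moment constraints, and moment positivity then bounds every other entry in terms of $z_0^{(t)}$. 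Extract a pointwise limit $\mathbf{z}^{*}\in\mathbb{R}^{\mathbb{N}^n}$ along a diagonal subsequence; it satisfies the semidefinite and ideal constraints at every truncation level. Putinar's theorem provides a representing measure $\mu^{*}$ for $\mathbf{z}^{*}$ on $K$, and a Tchakaloff-type atomic reduction replaces $\mu^{*}$ with a finitely atomic measure realizing the same moments of degree $\le m$; this yields a completely positive decomposition of $\mathcal{A}$, proving (2). The same argument applied to the sequence of optimizers gives (4), once one notes that for $s$ large the limit tms admits a unique representing measure (by genericity of $F$), forcing the rank stabilization \cref{equ:rank condition}.

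\emph{Part (5) is the main obstacle.} Finite convergence is not automatic; it relies on Nie's genericity analysis of the Lasserre hierarchy. Under the ``mild conditions''---which amount to strict complementarity and a linear independence constraint qualification at the KKT points of the program associated with \cref{equ:dense dual}---the dual attains its maximum at some finite $t$, and by strong duality the primal optimizer inherits the flatness needed to invoke \cref{thm:flat}. I expect the bulk of the work here to consist of verifying that these nondegeneracy conditions hold generically in the coefficients of a random SOS polynomial $F\in\mathbb{R}[\mathbf{x}]_{\mathcal{A}}$; this should follow from a Sard-type transversality argument on the finite set of optimal atoms of $\mu$. Once (5) is established, the full statement assembles from the five pieces.
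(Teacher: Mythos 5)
The paper does not actually prove this statement: \cref{thm:nie} is imported verbatim from \cite{nie2014truncated,fan2017semidefinite}, with the ``mild conditions'' of item~5 deferred to those references in the following remark. So there is no in-paper proof to match; the closest internal analogue is the proof of \cref{thm:checkable}, which reveals the mechanisms the authors have in mind and which differ from yours at the points noted below.

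The concrete gap is your boundedness argument, on which both your part~(2) and part~(4) rest. You claim that taking moments of $(\mathbf{x}^T\mathbf{x})^k=1$ expresses $z_0$ as a fixed combination of entries pinned to $\mathcal{A}$; this requires $2k=m$ and therefore fails for odd $m$ (the paper's examples have $m=3,5$), since the relaxation only fixes the moments of degree exactly $m$ and the degree-$2k$ moments are otherwise free. Consequently the feasible set of \cref{equ:dense relax} is not obviously bounded, and you cannot extract a convergent diagonal subsequence from \emph{arbitrary} feasible points to prove~(2); the standard proof of~(2) is instead a dual separation argument (if the tms $\mathbf{w}$ admits no $K$-measure, a polynomial $p\in\mathbb{R}[\mathbf{x}]_{\mathcal{A}}$ with $p>0$ on $K$ and $\langle p,\mathbf{w}\rangle<0$ exists; Putinar places $p$ in the truncated quadratic module at some level $t$, making the relaxation infeasible there). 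Boundedness is recovered only for the \emph{optimizers} and only under the genericity of $F$: one uses $F-\epsilon\in\Sigma[\mathbf{x}]_d$ to get $(\mathbf{z}^{*,t})_0\le\langle F,\mathbf{z}^{*,t}\rangle/\epsilon\le val/\epsilon$ and then bounds $\|\mathbf{z}^{*,t}|_{2s}\|$ by the trace of the moment matrix via $\langle\|\mathbf{x}\|^{2i},\mathbf{z}^{*,t}\rangle=(\mathbf{z}^{*,t})_0$ --- exactly the computation the paper carries out in part~4 of the proof of \cref{thm:checkable}. The same correction applies to your part~(3): attainment does not follow from ``the spectrahedron is bounded'' but from strong duality, using that the zero polynomial is strictly feasible for \cref{equ:dense dual} when $F\in int(\Sigma[\mathbf{x}]_d)$. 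Finally, your part~(5) is a description of what must be verified rather than a verification; given that the theorem itself only asserts finite convergence ``under mild conditions'' and points to \cite{nie2014truncated,nie2014optimality}, this is defensible, but it is not a proof.
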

\begin{remark}
	The mild conditions for finite convergence can be refered to \cite{nie2014truncated,nie2014optimality}.
\end{remark}

However, a common drawback of the dense moment hierarchy \cref{equ:dense relax} is that sizes of the matrices grow quickly with the level $t$ and with the order and dimension of the tensor $\mathcal{A}$.

\subsection{Ideal-sparsity for generalized moment problem}
Korda et al. \cite{korda2024exploiting} has considered the generalized moment problem of the form
\begin{equation}\label{equ:quadratic_GMP}
	\begin{aligned}
	val:=\inf_{\mu\in\mathcal{M}(\mathbb{R}^n)}&\int f_0d\mu\\
	\text{s.t.}&\int f_id\mu=a_i(i\in[N])\\
	&\text{Supp}(\mu)\subseteq K,
	\end{aligned}
\end{equation}
where
\begin{equation}\label{equ:quadratic}
	K=\{\mathbf{x}\in\mathbb{R}^n:g_j(\mathbf{x})\ge0~(j\in[m]),~x_ix_j=0~(\{i,j\}\in\bar{\mathbb{E}})\}. 
\end{equation}
$\mathbb{E}$ is the edge set of a graph $G=([n], \mathbb{E})$ and $\bar{\mathbb{E}}$ corresponds to the set of nonedges. We call $G$ the support graph of \cref{equ:quadratic_GMP}. A set $U\subseteq [n]$ is a clique of the graph $G$ if $\{i,j\}\in \mathbb{E}$ for any two distinct vertices $i,j\in U$. If a clique is not strictly contained in any other clique of $G$, it is maximal.

Then the ideal constraints generated by $x_ix_j=0$ in \cref{equ:quadratic}, imply that the support points of $\mu$ should be sparse. Korda et al. \cite{korda2024exploiting} refer to this sparsity structure on \eqref{equ:quadratic_GMP} as ideal-sparsity. They construct the following equivalent sparse problem of \cref{equ:quadratic_GMP}:
\begin{equation}\label{equ:quadratic_sparse}
	\begin{aligned}
		val^{isp}:=\inf_{\mu_k\in\mathcal{M}(\mathbb{R}^{|V_k|}),k\in[p]}&\sum_{k=1}^p\int f_0|_{V_k}d\mu_k\\
		\text{s.t.}&\sum_{k=1}^p\int f_i|_{V_k}d\mu_k=a_i(i\in[N])\\
		&\text{Supp}(\mu_k)\subseteq K_k~(k\in[p]),
	\end{aligned}
\end{equation}
where $V_k~(k\in [p])$ is the maximal clique of $G$ and 
\[K_k:=\{\mathbf{y}\in\mathbb{R}^{|V_k|}:\hat{\mathbf{y}}\in K,~\text{Supp}(\hat{\mathbf{y}})\subseteq V_k\}.\]

Therefore, the sparse generalized moment problem \cref{equ:quadratic_sparse} may optimize over several measures that are supported on dimensional spaces smaller than $\mathbb{R}^n$. As a result, the ideal-sparse moment hierarchy can be established, in which the involved positive semidefinite matrices are of much smaller size, compared with the dense moment relaxations.

\section{The uniform multi-hypergraph and its clique}
We consider the completely positive tensor $\mathcal{A}\in S^m(\mathbb{R}^n)$ with $\mathcal{A}_{i_1,\cdots,i_m}=0$ for some $1\le i_1\le\cdots \le i_m\le n$.  In such case, as will be demonstrated in \cref{sec:ideal}, problem \cref{equ:dense GMP} is equivalent to a generalized moment problem restricted on \[K\subseteq\{\mathbf{x}\in\mathbb{R}^n:x_{i_1}\cdots x_{i_m}=0 \text{ for }\mathcal{A}_{i_1,\cdots,i_m}=0\}.\] It is natural to expect that the ideal-sparsity can be also applied when the ideal is genereated by $x_{i_1}\cdots x_{i_m}$. And then we can deal with similar ideal-sparse moment hierarchies instead of dense moment relaxations \cref{equ:dense relax} to solve the completely positive tensor decomposition problem. However, this extension is not as straightforward as it appears.

\subsection{Difficulties from $x_ix_j$ to $x_{i_1}\cdots x_{i_m}$}
The construction of the sparse problem \cref{equ:quadratic_sparse} relies on maximal cliques of the associated support graph. As pointed by Korda el al. \cite{korda2024exploiting}, $V_1,\cdots,V_p$ indeed denote the maximal subsets of $[n]$ that do not contain any set $\{i_1,\cdots,i_m\}$ with $x_{i_1}\cdots x_{i_m}=0$. 

First, it is obvious that $x_{i_1}\cdots x_{i_m}=0(m\ge 3)$ is not equal to $x_{j_1}x_{j_2}=0$ for all distinct $j_1,j_2\in \{i_1,\cdots,i_m\}$ when $[\{i_1,\cdots,i_m\}]$ has more than two elements. This often occurs in competely positive tensors, which can be seen in the following example.
\begin{example}
	Consider the symmetric tensor $\mathcal{A}\in S^3(\mathbb{R}^3)$. It has the completely positive decomposition
	\[\mathcal{A}=\begin{bmatrix}1\\1\\0\end{bmatrix}^{\otimes 3}+\begin{bmatrix}0\\1\\1\end{bmatrix}^{\otimes 3}+\begin{bmatrix}1\\0\\1\end{bmatrix}^{\otimes 3}.\]
It leads to the constraint $x_1x_2x_3=0$ while $x_1x_2,x_2x_3,x_1x_3\neq 0$.
\end{example}

Therefore, for $m\ge 3$ one may need another approach to characterize the relationship $x_{i_1}\cdots x_{i_m}=0$. It has been known that a hypergraph $G$ contains edges that connect more than two nodes. However, if we want to define a similar support graph as the one for \cref{equ:quadratic_GMP}, $x_{i_1}\cdots x_{i_m}\neq 0$ indicates an edge connecting $\{i_1,\cdots,i_m\}$, which may contains repeat vertices (often called a hyperloop). Then conflict may arise in the definition of cliques, when for example $\mathcal{A}_{112}=0$ while $\mathcal{A}_{122}\neq 0$.

Another concern related to the approach expoliting ideal-sparsity is that how to obtain ``maximal cliques" for a hypergraph, since the maximum clique problem for graph has been known to be an NP-complete problem \cite{wu2015review}.

In the next two subsections, we introduce the definition of the maximal clique for a multi-hypergraph associated with completely positive tensors. And an algorithm to generate all such maximal cliques is proposed, which will be used to exploit the ideal-sparsity.
\subsection{The uniform multi-hypergraph}
We first recall and introduce the uniform multi-hypergraph and its clique.
\begin{definition}[\cite{pearson2014spectral}]
	Let $V=\{v_1,v_2,\cdots,v_n\}$. A multi-hypergraph $G$ is a pair $(V,\mathbb{E})$, where $\mathbb{E}=\{E_1,\cdots,E_N\}$ is a set of multisets (a set in which members are allowed to appear more than once) of $V$. The elements of $V$ are called the vertices and the elements of $\mathbb{E}$ are called the edges. Moreover, a multi-hypergraph is called $m$-uniform $(m\ge 2)$ if for all $E\in\mathbb{E}$, $|E|=m$ (including repeated memberships).
\end{definition}
\begin{definition}[\cite{xu20160}]
	Let $V=[n]$. A tensor $\mathcal{A}\in S^m(\mathbb{R}^n)$ is said to be an associated tensor with the $m$-uniform multi-hypergraph $G=(V,\mathbb{E})$ if $\mathcal{A}_{i_1,\cdots,i_m}\neq 0$ for all $\{i_1,\cdots,i_m\}\in\mathbb{E}$, and $\mathcal{A}_{i_1,\cdots,i_m}=0$ otherwise. We call $G$ the support multi-hypergraph of $\mathcal{A}$.
\end{definition}

\begin{lemma}[Zero entry dominance property \cite{qi2017tensor}]\label{lem:dominance}
	Suppose that $\mathcal{A}\in S^m(\mathbb{R}^n)$ is completely positive, for any two indices $\{i_1,\cdots,i_m\}$ and $\{j_1,\cdots,j_m\}$, $i_1\le\cdots\le i_m$, $j_1\le\cdots\le j_m$, if $[\{i_1,\cdots,i_m\}]\subseteq [\{j_1,\cdots,j_m\}]$,  and $\mathcal{A}_{j_1,\cdots,j_m}>0$, then $\mathcal{A}_{i_1,\cdots,i_m}>0$.
\end{lemma}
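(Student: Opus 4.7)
The plan is to prove the zero entry dominance property directly from the completely positive decomposition, by translating positivity of an entry $\mathcal{A}_{i_1,\ldots,i_m}$ into a support condition on the vectors $\mathbf{v}^r$.

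First I would write out a completely positive decomposition $\mathcal{A}=\sum_{r=1}^{R}(\mathbf{v}^r)^{\otimes m}$ with $\mathbf{v}^r\ge 0$ componentwise, so that
\[
\mathcal{A}_{i_1,\ldots,i_m}=\sum_{r=1}^{R}v^r_{i_1}v^r_{i_2}\cdots v^r_{i_m}.
\]
Since each summand is a product of nonnegative numbers, every term is nonnegative; consequently a positive entry forces at least one summand to be strictly positive. Hence $\mathcal{A}_{k_1,\ldots,k_m}>0$ if and only if there exists an index $r$ with $v^r_{k_s}>0$ for every $s\in[m]$, which is in turn equivalent to $[\{k_1,\ldots,k_m\}]\subseteq\text{Supp}(\mathbf{v}^r)$. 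This equivalence is the key step.

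Applying this to the hypothesis $\mathcal{A}_{j_1,\ldots,j_m}>0$ yields some index $r^\star$ with $[\{j_1,\ldots,j_m\}]\subseteq\text{Supp}(\mathbf{v}^{r^\star})$. The set containment assumption $[\{i_1,\ldots,i_m\}]\subseteq[\{j_1,\ldots,j_m\}]$ then gives $[\{i_1,\ldots,i_m\}]\subseteq\text{Supp}(\mathbf{v}^{r^\star})$, so $v^{r^\star}_{i_1}\cdots v^{r^\star}_{i_m}>0$. Adding this strictly positive summand to the remaining nonnegative summands produces $\mathcal{A}_{i_1,\ldots,i_m}>0$, as desired.

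I do not expect any genuine obstacle here; the statement is a structural consequence of the nonnegativity of all the vectors in a completely positive decomposition. The only minor care needed is in correctly handling the bracket notation $[\{\cdot\}]$ for the set of distinct elements (so that multiplicities in the index tuple do not obscure the support containment) and in noting that the existence of a single summand making the entry positive is enough, since there are no cancellations.
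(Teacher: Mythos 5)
Your proof is correct. Note, however, that the paper does not actually prove \cref{lem:dominance}; it is stated as a cited result from the reference on zero entry dominance, so there is no in-paper argument to compare against. Your elementary derivation is the standard one and is essentially the same computation the paper itself uses later in the proof of \cref{lem:clique decomposition}, where it observes that $\mathcal{A}_{j_1,\cdots,j_m}\ge \mathbf{v}^r_{j_1}\cdots \mathbf{v}^r_{j_m}>0$ whenever $[\{j_1,\cdots,j_m\}]\subseteq\text{Supp}(\mathbf{v}^r)$. Your handling of the bracket notation is right: a monomial $v^r_{i_1}\cdots v^r_{i_m}$ is positive exactly when every \emph{distinct} index lies in $\text{Supp}(\mathbf{v}^r)$, multiplicities being irrelevant, and since all summands in a completely positive decomposition are nonnegative, a single positive summand forces the entry to be positive with no possibility of cancellation.
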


Motivated by the $m$-uniform clique of the hypergraph \cite{torres2017hclique} and the zero dominance property of completely positive tensors, we define the clique for a support multi-hypergraph of completely positive tensors as following.
\begin{definition}[Clique and maximal clique]\label{def:clique}
	Let $G=(V,\mathbb{E})$ be an $m$-uniform multi-hypergraph corresponding to completely positive tensors. The clique of $G$ is a subset (without repeat members) $J\subset V$, such that any multiset $E\subset J$ with $|E|=m$ (including repeated memberships) is an edge of $G$. And a clique is maximal if it is not strictly contained in any other clique of $G$.
\end{definition}
The clique in the above definition is well defined in the sense that for any indice $\{i_1,\cdots,i_m\}$, it forms an edge of the support multi-hypergraph $G$ of completely positive tensors if and only if $[\{i_1,\cdots,i_m\}]$ belongs to some clique of $G$. This fact is evident due to \cref{lem:dominance}.
\begin{remark}
	 Xu el al. \cite{xu20160} have defined the clustering of edges for a support multi-hypergraph $G$, denoted by $\mathcal{D}_1,\cdots,\mathcal{D}_r$. We point out that they correspond one-to-one with maximal cliques $V_1,\cdots,V_r$ of $G$. In fact, $V_i=[l_i]$ where $l_i$ is the maiximal element of $\mathcal{D}_i$.
\end{remark}

\subsection{Maximal clique algorithm}\label{sec:clique}
One major issue to be solved exploring the ideal sparsity in tensor completely decomposition problem is how to determine maximal cliques of the associated support multi-hypergraph. The difficulty different from the graph maximal clique problem, comes from that in a multi-hypergraph, the vertices in an edge $E$ can be repeated and $|E|>2$ in general. 

Nevertheless, benefit from the zero entry dominance property, we can propose a preliminary algorithm to generate maximal cliques of a uniform multi-hypergraph associated with the completely positive tensor, utilizing the clique introduced in \cref{def:clique}. The algorithm finding all maximal cliques is presented below in \cref{alg:clique}.
\begin{algorithm}[htb]
	\caption{Maximal cliques generation}\label{alg:clique}
	\begin{algorithmic}[1]
		\State Given a completely positive tensor $\mathcal{A}\in S^m(\mathbb{R}^n)$, and an initial set of candidates $C=\{[n]\}$.
		\For{$1\le i_1\le\cdots\le i_m\le n$}
		\State (Split Procedure, step 4.-step 20.)
		\If{$\mathcal{A}_{i_1,\cdots,i_m}=0$}
		\For{$S\in C$}
		\If{$[\{i_1,\cdots,i_m\}]\subseteq S$}
		\State $C:=C\backslash \{S\}$ \Comment{exclude impossible candidates}
		\State $add\_C:=\emptyset$
		\For{$k\in [\{i_1,\cdots,i_m\}]$}
		\State $S\_new:=S\backslash\{k\}$
		\State $add\_C:=add\_C\cup\{S\_new\}$
		\EndFor
		\EndIf
		\EndFor
		\For{$S\_new\in add\_C$}
		\If{$S\_new\not\subseteq S$ for any $S\in C$}\Comment{add possible candidates}
		\State $C:=C\cup\{S\_new\}$
		\EndIf
		\EndFor
		\EndIf 
		\EndFor
		\State \textbf{Return} $C\_end := C$ the set of all maximal cliques.
	\end{algorithmic}
\end{algorithm}

\begin{proposition}\label{pro:clique}
	Suppose $\mathcal{A}\in S^m(\mathbb{R}^n)$ is complete positive, $G=([n],\mathbb{E})$ is the support multi-hypergraph of $\mathcal{A}$. Then \cref{alg:clique} can generate all maximal cliques of $G$.
\end{proposition}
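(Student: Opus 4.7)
The plan is to reduce the clique condition to an explicit avoidance requirement and then establish a loop invariant for $C$ by induction. First, from \cref{def:clique} I would derive the reformulation: a subset $J \subseteq [n]$ is a clique of $G$ if and only if $[\{i_1,\ldots,i_m\}] \not\subseteq J$ for every zero entry $\mathcal{A}_{i_1,\ldots,i_m}=0$. The ``only if'' direction is the contrapositive of the clique property applied to the multiset $\{i_1,\ldots,i_m\}$, while the ``if'' direction follows directly, with \cref{lem:dominance} ensuring that the edge/non-edge status of a multiset is well-defined independently of multiplicity. Consequently, maximal cliques of $G$ are exactly the inclusion-maximal subsets of $[n]$ that avoid every zero-entry support set.

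Let $D_1,\ldots,D_N$ be the distinct sets $[\{i_1,\ldots,i_m\}]$ with $\mathcal{A}_{i_1,\ldots,i_m}=0$, listed in the order processed by the outer loop, and write $C_j$ for the state of $C$ after the $j$th outer iteration. The core of the proof is the invariant
\[
C_j = \bigl\{\, S \subseteq [n] : S \text{ is inclusion-maximal among subsets satisfying } D_l \not\subseteq S \text{ for all } l \le j \,\bigr\}.
\]
I would proceed by induction on $j$. The base case $j=0$ is immediate from $C_0 = \{[n]\}$. For the inductive step, I would dissect the split procedure at $D_{j+1}$: each $S \in C_j$ with $D_{j+1} \not\subseteq S$ survives unchanged, while each $S$ with $D_{j+1} \subseteq S$ is removed and replaced by the sets $S\setminus\{k\}$ for $k \in D_{j+1}$, all of which avoid $D_{j+1}$. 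Soundness (every member of $C_{j+1}$ is a maximal avoider of $D_1,\ldots,D_{j+1}$) then follows: surviving and split-produced sets all continue to avoid $D_1,\ldots,D_j$ by the inductive hypothesis, and the filter ``$S\_new \not\subseteq S$ for any $S\in C$'' eliminates any proposed set that is strictly contained in another surviving candidate.

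The delicate step, which I expect to be the main obstacle, is the completeness direction: every inclusion-maximal $T$ avoiding $D_1,\ldots,D_{j+1}$ actually appears in $C_{j+1}$. Since $T$ avoids $D_1,\ldots,D_j$, the inductive hypothesis yields some $S \in C_j$ with $T \subseteq S$. If $D_{j+1} \not\subseteq S$, then $S$ already avoids the whole list and maximality of $T$ forces $T = S$. Otherwise, $T$ must miss some $k \in D_{j+1}$, so $T \subseteq S\setminus\{k\}$, with equality again by maximality; and $S\setminus\{k\}$ is one of the candidates placed in $add\_C$. What requires care is that the containment filter does not silently discard $T$: if $T \subsetneq S'$ for some $S'$ present in $C$ at the moment of the check, then $S'$ is either a surviving member of $C_j$ (hence avoids $D_1,\ldots,D_j$ and, since it was not removed, also $D_{j+1}$), or it is a previously-added split piece of some element of $C_j$ (hence avoids $D_1,\ldots,D_{j+1}$ for the same reasons). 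In either case $S'$ strictly exceeds $T$ while avoiding the full list, contradicting the maximality of $T$. Applying the invariant at $j = N$ then identifies the returned set $C$ with the family of all maximal cliques of $G$, completing the proof.
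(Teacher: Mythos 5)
Your proof is correct, and it follows the same overall template as the paper's --- an induction over executions of the Split Procedure governed by a loop invariant --- but the invariant you maintain is genuinely different and sharper. The paper tracks two weaker properties of the candidate set: its members are pairwise non-contained, and every maximal clique of $G$ is contained in some candidate; it then needs a separate termination argument (if some candidate is not yet a clique, an unprocessed zero entry remains, so the Split Procedure fires again) before concluding that the final antichain coincides with the maximal cliques. You instead first reformulate cliques as the subsets avoiding every set $[\{i_1,\ldots,i_m\}]$ with $\mathcal{A}_{i_1,\ldots,i_m}=0$, and then prove the exact identity $C_j=\{\text{inclusion-maximal avoiders of }D_1,\ldots,D_j\}$ after each stage, so the conclusion at $j=N$ is immediate. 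This buys you something: your invariant nowhere uses complete positivity, so your argument directly establishes \cref{thm:maximal sets} for arbitrary symmetric tensors, which the paper only asserts follows ``trivially'' from its proof; moreover \cref{lem:dominance} is not really needed for your reformulation, since a multiset $E\subset J$ with $|E|=m$ fails to be an edge exactly when $[E]$ is the support of a zero entry. One point of polish in your soundness half: the containment filter checks a new piece $S\setminus\{k\}$ only against the \emph{current} contents of $C$, so you should rule out its being strictly contained in another new piece added later in the same stage. This cannot occur --- for distinct parents $S\neq S'$ both containing $D_{j+1}$, any witness of $S\not\subseteq S'$ would have to be $k$ itself, yet $k\in D_{j+1}\subseteq S'$ --- but the paper's own proof glosses over the identical point, so this is a refinement rather than a gap.
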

\begin{proof}
	Let $V_i$, $i\in[p]$ denote all the maximal cliques of $G$. We will prove this proposition by induction. With the initial set given as $C=\{[n]\}$, it is obvious that $V_i\subseteq [n], ~i\in[p]$. If $\mathcal{A}_{i_1,\cdots,i_m}\neq 0$ for any $1\le i_1\le\cdots\i_m\le n$, then $[n]$ is a maximal clique and the Split Procedure will not be conducted. \cref{alg:clique} finds all maximal cliques trivially.
	
	Otherwise, there exists $\mathcal{A}_{i_1,\cdots,i_m}=0$ for some $1\le i_1\le\cdots\le i_m\le n$. Suppose that $[\{i_1,\cdots,i_m\}]=\{j_1,\cdots,j_l\}$ and $j_k\neq j_t$ when $k\neq t$. Consider executing the Split Procedure once, then $C=\{[n]\}$ is updated to $C=\{S_1,\cdots,S_l\}$ where $S_k=[n]\backslash\{j_k\}$. Clearly,  $S_k\not\subseteq S_t$ when $k\neq t$. For each maximal clique $V_i$, we also have that $V_i\subseteq S_k$ for some $k$. In fact, if $V_i\not\subseteq S_k$ for any $k$, then $j_k\in V_i$ for $k\in[l]$. It leads that $\{i_1,\cdots,i_m\}\in\mathbb{E}$, which conflicts. And it is obvious that $[\{i_1,\cdots,i_m\}]$ is not included in any $S_k$ for $k\in[l]$.
	
	Assume that after executing the Split Procedure $k$ times, the current candidate set $C=\{S_1,\cdots,S_l\}$. It satisfies $S_k\not\subseteq S_t$ when $k\neq t$, and each $V_i\subseteq S_k$ for some $k$. If $S_t$ is a clique, then it is maximal, otherwise $S_t\subseteq V_i\subseteq S_k$ for some $i$ and $k$, which is a contradiction. If every $S_t$ is maximal, then \cref{alg:clique} finds $C\_end=C$, which is exactly the set of all maximal cliques. If there is a $S_k$ that is not a clique, then there exists $[{i_1,\cdots,i_m}]\subseteq S_k$ such that $\mathcal{A}_{i_1,\cdots,i_m}=0~(1\le i_1\le\cdots\le i_m\le n)$ and has not been checked by \cref{alg:clique}. Thus, the Split Procedure will be conducted again. Suppose that $C$ is updated to $C=\{\hat{S}_1,\cdots,\hat{S}_{\hat{l}}\}$. Now, we prove that $\hat{S}_k\not\subseteq \hat{S}_t$ when $k\neq t$, and each $V_i\subseteq \hat{S}_k$ for some $k$.
	
	From step 6 and step 18, $\hat{S}_k$ is either $S_t$ for some $t\in[l]$ or a new generated set $S\_{new}$. And $S\_{new}$ will only be added to $C$ if it is not included in any of existing items of $C$ from step 16-step 17. Hence, $\hat{S}_k$ are still not mutually contained. Suppose that $V_i\subseteq S_k$. If $[\{i_1,\cdots,i_m\}]\not\subseteq S_k$, then $S_k$ is conserved in the updated $C$. If $[\{i_1,\cdots,i_m\}]=\{j_1,\cdots,j_l\}\subseteq S_k$, then we have $S_k\backslash\{j_s\}\in add\_C$ for $s\in [l]$ from step 9-step 12. We claim that $V_i\subseteq S_k\backslash\{j_s\}$ for some $s$. Otherwise, $\{j_1,\cdots,j_l\}\subseteq V_i$ and it conflicts. The desired result follows by induction.
\end{proof}

\begin{corollary}\label{thm:maximal sets}
	For any symmetric tensor $\mathcal{A}$, whether it is completely positive or not, $V_1,\cdots,V_p$ generated by \cref{alg:clique} indeed are the maximal subsets of $[n]$ that do not contain any set $\{i_1,\cdots,i_m\}$ with $\mathcal{A}_{i_1,\cdots,i_m}=0$.
\end{corollary}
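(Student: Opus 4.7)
The plan is to adapt the inductive argument of \cref{pro:clique}, replacing the role of ``maximal clique'' with that of a \emph{maximal valid subset}, where I call $U\subseteq[n]$ \emph{valid} whenever it contains no tuple $\{i_1,\ldots,i_m\}$ with $\mathcal{A}_{i_1,\ldots,i_m}=0$. The key observation enabling this is that \cref{alg:clique} interrogates $\mathcal{A}$ only through the binary test $\mathcal{A}_{i_1,\ldots,i_m}=0$ and thereafter performs purely combinatorial splittings; it never uses nonnegativity, sign patterns, or the zero entry dominance property of \cref{lem:dominance}. Hence the algorithm is well defined for any symmetric $\mathcal{A}$, and the corollary reduces to proving that its output is exactly the family of maximal valid subsets of $[n]$.

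The technical core is an induction on the number of iterations of the Split Procedure, maintaining two invariants on the current candidate set $C$: (I1) every maximal valid subset $U\subseteq[n]$ is contained in some $S\in C$; and (I2) distinct elements of $C$ are incomparable under inclusion. Both are trivial at initialization, where $C=\{[n]\}$. For the inductive step, suppose the Split Procedure is executed at a zero entry $\mathcal{A}_{i_1,\ldots,i_m}=0$ and consider any $S\in C$ with $[\{i_1,\ldots,i_m\}]\subseteq S$. If a maximal valid $U$ was contained in $S$, then validity of $U$ forces the existence of some $k\in[\{i_1,\ldots,i_m\}]$ with $k\notin U$, so $U\subseteq S\setminus\{k\}$; either this $S\setminus\{k\}$ is admitted to $C$ in step~17, or it is absorbed by some existing $S'\in C$ identified at step~16, which still covers $U$. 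This yields (I1). For (I2), the filter in steps~15--17 guarantees that no newly added candidate is contained in an existing member.

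Upon termination of the outer loop, every $S\in C$ must be valid, since otherwise the Split Procedure would still have been triggered by an unprocessed zero entry within $S$. Combined with (I1), any maximal valid $U$ is contained in a valid $S\in C$, and maximality of $U$ forces $U=S$. Conversely, any $S\in C$ is valid and hence is contained in some maximal valid $U$, which then also lies in $C$ by the previous direction, and (I2) immediately yields $S=U$. This identifies $C$ with the family of maximal valid subsets of $[n]$, which is the statement of the corollary.

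I expect the main obstacle to be the careful verification that (I2) is preserved by the ``add'' phase: while step~16 explicitly rejects new candidates $S\setminus\{k\}$ contained in existing members, one must check separately that no existing $S'\in C$ ends up strictly contained in some newly admitted $S\setminus\{k\}$. This follows because every removed $k$ lies in $[\{i_1,\ldots,i_m\}]$, which is contained in every $S$ currently being split, so $S'\subsetneq S\setminus\{k\}\subsetneq S$ would contradict the pre-split version of (I2) applied to $S'$ and $S$; the case where $S'$ itself was generated during the current splitting is handled by the analogous intersection argument, noting that two sets of the form $S\setminus\{k\}$ and $S\setminus\{k'\}$ with $k\neq k'$ are manifestly incomparable. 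This bookkeeping is easy to miss but routine once formulated, and apart from it the proof is a faithful translation of the argument of \cref{pro:clique}.
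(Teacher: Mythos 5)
Your proof is correct and takes essentially the same approach as the paper: the paper disposes of this corollary by observing that it follows from the proof of \cref{pro:clique}, whose induction never actually uses complete positivity or the zero entry dominance property, and your argument is precisely that translation made explicit, with ``clique'' replaced by ``valid subset'' and the same two invariants (covering and incomparability of the candidate set) maintained through the Split Procedure. The extra bookkeeping you supply for preserving incomparability is sound and, if anything, more careful than the paper's own treatment.
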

This result can be obtained through the proof of \cref{pro:clique} trivially.

For a completely positive decomposable tensor $\mathcal{A}\in S^m(\mathbb{R}^n)$, \cref{alg:clique} can find all its maximal cliques from \cref{pro:clique}, so that $[\{i_1,\cdots,i_m\}]$ should be included in at least one maximal cliques for any $\mathcal{A}_{i_1,\cdots,i_m}\neq 0$. Conversely, for a nonnegative symmetric tensor, we can still implete \cref{alg:clique} for it first. If there exists $[\{i_1,\cdots,i_m\}]$ not included in any generated ``maximal clique", for some $\mathcal{A}_{i_1,\cdots,i_m}\neq 0$, this tensor is not completely positive decomposable. Hence, it implies a necessary condition for a nonnegative symmetric tensor to be competely positive as presented below.
\begin{corollary}\label{thm:necessary}
	Let $\mathcal{A}\in S^m(\mathbb{R}^n)$, $V_1,\cdots,V_p$ are generated by \cref{alg:clique} applied to $\mathcal{A}$. If $\mathcal{A}$ is completely positive, then for any $\mathcal{A}_{i_1,\cdots,i_m}\neq 0$, $[\{i_1,\cdots,i_m\}]\in V_k$ for some $k$. Otherwise, $\mathcal{A}$ is not completely positive.
\end{corollary}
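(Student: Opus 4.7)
The plan is to deduce this corollary directly from \cref{pro:clique}, the definition of a clique (\cref{def:clique}), and the zero entry dominance property (\cref{lem:dominance}). The forward implication is essentially a repackaging of the well-definedness observation that follows \cref{def:clique}, and the "otherwise" part is the contrapositive.

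First I would assume $\mathcal{A}$ is completely positive so that \cref{pro:clique} guarantees that \cref{alg:clique} outputs exactly the collection of all maximal cliques $V_1,\ldots,V_p$ of the support multi-hypergraph $G=([n],\mathbb{E})$. Fix any index tuple with $\mathcal{A}_{i_1,\ldots,i_m}\neq 0$ and set $J:=[\{i_1,\ldots,i_m\}]$. I would then argue that $J$ itself is a clique of $G$ in the sense of \cref{def:clique}: for an arbitrary $m$-multiset $E$ drawn from $J$ (with repetitions allowed), $[E]\subseteq J=[\{i_1,\ldots,i_m\}]$, so the hypothesis $\mathcal{A}_{i_1,\ldots,i_m}>0$ together with \cref{lem:dominance} forces $\mathcal{A}_E>0$, i.e.\ $E\in\mathbb{E}$. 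Hence $J$ meets \cref{def:clique}, and since every clique is contained in some maximal clique, we obtain $J\subseteq V_k$ for some $k\in[p]$.

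The second assertion is just the contrapositive: if there exists an index tuple with $\mathcal{A}_{i_1,\ldots,i_m}\neq 0$ for which $[\{i_1,\ldots,i_m\}]$ is contained in no $V_k$, then $\mathcal{A}$ cannot be completely positive, since otherwise the previous paragraph would yield a containing $V_k$.

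There is no real obstacle here; the statement is a short consequence of the already-established \cref{pro:clique}. The only subtlety worth flagging is the passage from a multiset edge $\{i_1,\ldots,i_m\}$ (which may carry repetitions) to its underlying set $J=[\{i_1,\ldots,i_m\}]$ (which by \cref{def:clique} does not), and this is exactly the job done by \cref{lem:dominance}, which propagates nonzeroness from one distinguished tuple to every $m$-multiset drawn from $J$.
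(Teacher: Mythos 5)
Your proposal is correct and follows the same route the paper (implicitly) takes: it invokes \cref{pro:clique} to know the algorithm outputs all maximal cliques of the support multi-hypergraph, uses the zero entry dominance property (\cref{lem:dominance}) to show that $[\{i_1,\ldots,i_m\}]$ is itself a clique (this is exactly the well-definedness remark after \cref{def:clique}), and reads the second assertion as the contrapositive. The paper offers only the informal paragraph preceding the corollary as justification, so your write-up, including the explicit handling of the multiset-to-set passage, is if anything slightly more careful than the original.
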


\begin{remark}
	Although the idea is to check all possible subsets of $[n]$ and find maximal cliques, \cref{alg:clique} might be more promising than expected. Due to the sparsity and zero entry dominance property of complete positive tensors, the Split Procedure can be conducted efficiently. For instance, let $\mathcal{A}\in S^3(\mathbb{R}^n)$ be complete positive. If $\mathcal{A}_{112}=0$, the Split Procedure will update $C$ such that $\{1,2\}\not\subseteq S_k$ for any $S_k\in C$. Since \cref{alg:clique} check $\{i_1,\cdots,i_m\}$ in order, for all subsequent $\mathcal{A}_{i_1,\cdots,i_m}=0$ with $\{1,2\}\subseteq\{i_1,\cdots,i_m\}$, like $\mathcal{A}_{123}$, there is no $S_k\in C$ than include $[\{i_1,\cdots,i_m\}]$. As a result, the Split Procedure may require much less operations. Please see more experiment examples in \cref{subsec:clique}.
\end{remark}

Here we provide a simple example to illustrate how \cref{alg:clique} works.
\begin{example}[\cite{fan2017semidefinite}]\label{exa:example}
	Consider the tensor $\mathcal{A}\in S^3(\mathbb{R}^3)$ given as:
	\[\mathcal{A}_{1,1,1}=2,\mathcal{A}_{1,1,2}=1,\mathcal{A}_{1,1,3}=1,\mathcal{A}_{1,2,2}=1.\mathcal{A}_{1,2,3}=0,\]
	\[\mathcal{A}_{1,3,3}=1,\mathcal{A}_{2,2,2}=2,\mathcal{A}_{2,2,3}=0,\mathcal{A}_{2,3,3}=0,\mathcal{A}_{3,3,3}=1.\]
	For convenience, we only list the value with indices $1\le i_1\le \cdots\le i_m\le n$ since $\mathcal{A}$ is symmetric. It has been proved to be completely positive. \cref{tab:example} presents the algorithm process for this example. `zero indice' is the indices of zero entries. `C' is the initial state of the candidate set when checking each zero entry. The elements with an underline are sets that will be excluded in the Split Procedure. The generated maximal cliques are $\{\{1,3\},\{1,2\}\}$.
	\begin{table}[H]\label{tab:example}
		\caption{Illustration of \cref{alg:clique}}
		\begin{center}
		\begin{tabular}{lll}
			\hline
			zero indice & $C$                                       & $add\_C$                      \\ \hline
			$\{1,2,3\}$ & $\{\underline{\{1,2,3\}}\}$                   & $\{\{2,3\},\{1,3\},\{1,2\}\}$ \\
			$\{2,2,3\}$ & $\{\underline{\{2,3\}},\{1,3\},\{1,2\}\}$ & $\{\{2\},\{3\}\}$             \\
			$\{2,3,3\}$ & $\{\{1,3\},\{1,2\}\}$                     & $\emptyset$                      \\
		 \hline
		\end{tabular}
		\end{center}
	\end{table}
\end{example}

\section{Ideal-sparse reformulation and sparse moment relaxations}\label{sec:ideal}
Throughout this section, $\bm{\alpha}=\mathbf{e}_{i_1}+\mathbf{e}_{i_2}+\cdots+\mathbf{e}_{i_m}$, unless otherwise specified. Let $G=(V,\mathbb{E})$ be the support multi-hypergraph of $\mathcal{A}\in S^m(\mathbb{R}^n)$. One can exploit the fact that \cref{equ:dense GMP} is equivalent to 
\begin{equation}\label{equ:dense ideal}
	\begin{aligned}
		\underset{\mu\in\mathcal{M}(\mathbb{R}^n)}{\min}&\int Fd{\mu}\\
		\mbox{s.t.}&\int \textbf{x}^{\bm{\alpha}}d{\mu}=\mathcal{A}_{i_1,i_2,\cdots,i_m}~(1\le i_1\le\cdots\le i_m\le n)\\
		&\text{Supp}(\mu)\subseteq K,
	\end{aligned}
\end{equation}
where 
\begin{equation}\label{equ:ideal K}
K=\{\mathbf{x}\in\mathbb{R}^n: \mathbf{x}^T\mathbf{x}-1=0,x_j\ge 0~j\in[n], x_{i_1}\cdots x_{i_m}=0~\{i_1,\cdots,i_m\}\in \bar{\mathbb{E}}\}.
\end{equation}
\begin{lemma}\label{lem:equal}
	Problems \cref{equ:dense GMP} and \cref{equ:dense ideal} are equivalent.
\end{lemma}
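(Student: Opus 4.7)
The plan is to observe that \cref{equ:dense ideal} is obtained from \cref{equ:dense GMP} by intersecting the support set $K$ with the algebraic variety $\{x_{i_1}\cdots x_{i_m}=0:\{i_1,\ldots,i_m\}\in\bar{\mathbb{E}}\}$, so one inequality between the optimal values is trivial by feasibility inclusion. The nontrivial direction is to show that every measure feasible for \cref{equ:dense GMP} is automatically feasible for \cref{equ:dense ideal}, which will follow from the moment constraints together with the nonnegativity baked into $K$.

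First I would note that any $\mu$ admissible in \cref{equ:dense ideal} is trivially admissible in \cref{equ:dense GMP} with the same objective, which yields val$_{\text{dense}}\le$ val$_{\text{ideal}}$. For the reverse direction, let $\mu$ be feasible for \cref{equ:dense GMP} and pick any $\{i_1,\ldots,i_m\}\in\bar{\mathbb{E}}$, so that $\mathcal{A}_{i_1,\ldots,i_m}=0$ by the very definition of the support multi-hypergraph. The moment constraint then reads
\[
\int x_{i_1}x_{i_2}\cdots x_{i_m}\,d\mu \;=\; \mathcal{A}_{i_1,\ldots,i_m}\;=\;0.
\]

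Now I would exploit that $\operatorname{Supp}(\mu)\subseteq K\subseteq\mathbb{R}^n_{\ge 0}$, which makes the integrand $x_{i_1}\cdots x_{i_m}$ a continuous nonnegative function on $\operatorname{Supp}(\mu)$. A standard measure-theoretic fact states that if a nonnegative continuous function on a locally compact space has zero integral against a positive Borel measure, it must vanish identically on the closed support of that measure (otherwise it would be bounded below by some $\varepsilon>0$ on a nonempty open set intersecting the support, forcing a strictly positive integral). Hence $x_{i_1}\cdots x_{i_m}\equiv 0$ on $\operatorname{Supp}(\mu)$, and since this holds for every $\{i_1,\ldots,i_m\}\in\bar{\mathbb{E}}$, the support of $\mu$ lies inside the smaller set $K$ defined in \cref{equ:ideal K}. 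Thus $\mu$ is feasible for \cref{equ:dense ideal} with the same objective value, giving val$_{\text{ideal}}\le$ val$_{\text{dense}}$ and hence equality.

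The only delicate point is the measure-theoretic step that converts an almost-everywhere vanishing of a continuous nonnegative function into a pointwise vanishing on the support; I do not expect this to be a genuine obstacle but would be careful to state it in the form needed (continuity plus nonnegativity on the closed support is all that is required). No additional hypothesis on $\mathcal{A}$ beyond symmetry and entrywise nonnegativity of the relevant moments is needed, because it is precisely the given moment equations $\int x^{\bm{\alpha}}d\mu=\mathcal{A}_{i_1,\ldots,i_m}$ combined with the sign constraints $x_j\ge 0$ that force the ideal-sparsity constraints to hold automatically.
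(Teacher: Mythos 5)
Your proposal is correct and follows essentially the same route as the paper: one direction is the trivial feasibility inclusion, and the other uses that $\int x_{i_1}\cdots x_{i_m}\,d\mu=\mathcal{A}_{i_1,\ldots,i_m}=0$ together with the nonnegativity constraints in $K$ to force $x_{i_1}\cdots x_{i_m}$ to vanish on $\operatorname{Supp}(\mu)$. The paper phrases this as a contradiction (a support point with positive product would give a positive integral), while you state the contrapositive measure-theoretic fact directly and more carefully, but the underlying argument is identical.
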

\begin{proof}
	First, assume $\mu$ is feasible for problem \cref{equ:dense ideal}. It is obvious that $\mu$ is also feasible for problem \cref{equ:dense GMP}.
	
	We now show the reverse. Assume $\mu$ is feasible for \cref{equ:dense GMP}. If it is not feasible for \cref{equ:dense ideal}, then there exists $\bar{\mathbf{x}}\in \text{Supp}(\mu)$, such that $\bar{x}_{i_1}\cdots \bar{x}_{i_m}>0$ for some $\{i_1,\cdots,i_m\}\in\bar{\mathbb{E}}$ and $\int x_{i_1}\cdots x_{i_m}d\mu>0$. This contradicts $\mathcal{A}_{i_1,\cdots,i_m}=0$ when $\{i_1,\cdots,i_m\}\in \bar{\mathbb{E}}$.
\end{proof}

From \cref{equ:ideal K}, $K$ is contained in the ideal
\[I_{\mathbb{E}}:=I((x_{i_1}\cdots x_{i_m})_{\{i_1,\cdots,i_m\}\in\bar{\mathbb{E}}}),\] which introduces an ideal-sparsity structure for \eqref{equ:dense GMP} inherited from the tensor $\mathcal{A}$. 
Recall the basic idea of ideal-sparsity is that, instead of optimizing over a single measure supported on $K\subset\mathbb{R}^n$, one may optimize over several measures that are supported on smaller dimensional spaces \cite{korda2024exploiting}. Here such smaller spaces are determined by the monomials $x_{i_1,\cdots,i_m}=0$.

\subsection{Ideal-sparse generalized moment problem reformulation}We abuse the maximal cliques to denote the maximal subsets $V_1,\cdots,V_p$ of $[n]$ that do not contain any set $\{i_1,\cdots,i_m\}$ with $x_{i_1}\cdots x_{i_m}=0$  $\{i_1,\cdots,i_m\in\bar{\mathbb{E}}\}$ in this subsection. From \cref{thm:maximal sets}, \cref{alg:clique} can generate these maximal subsets for any $\mathcal{A}\in S^m(\mathbb{R}^n)$.

Similar to Korda et al. \cite{korda2024exploiting}, we define the following subset of $K$:
\[\widehat{K_k}:=\{\mathbf{x}\in K:\text{Supp}(\mathbf{x})\subseteq V_k\}\subseteq K\subseteq \mathbb{R}^n~k\in[p].\]
And the projection of $\widehat{K_k}$ onto the subspace indexed by $V_k$ is defined as:
\begin{equation}\label{equ:ideal Kk}
K_k:=\{\mathbf{y}\in\mathbb{R}^{|V_k|}:\hat{\mathbf{y}}\in\widehat{K_k}\}\subseteq \mathbb{R}^{|V_k|}.
\end{equation}

In fact, for a nonnegative symmetric tensor $\mathcal{A}\in S^m(\mathbb{R}^n)$, if it is completely positive, its completely positive decomposition should inherit the clique property of its support multi-hypergraph.
\begin{lemma}\label{lem:clique decomposition}
	Let $G=(V,\mathbb{E})$ be the support multi-hypergraph of $\mathcal{A}\in S^m(\mathbb{R}^n)$. Suppose 
	\[\mathcal{A}=\sum_{r=1}^R(\mathbf{v}^r)^{\otimes m}\]
	is completely positive, $\mathbf{v}^1,\cdots \mathbf{v}^R$ are nonnegative. Then for each $\mathbf{v}^r,~r\in[R]$, there exists $k\in[p]$, such that
	\[\text{Supp}(\mathbf{v}^r)\subseteq V_k.\]
	That is, the support of a decomposition vector is contained in a maximal clique of $G=(V,\mathbb{E})$. 
\end{lemma}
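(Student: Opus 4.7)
The plan is to argue by contradiction, leveraging the characterization of the maximal cliques given by Corollary \ref{thm:maximal sets}: the sets $V_1,\dots,V_p$ produced by Algorithm \ref{alg:clique} are precisely the maximal subsets of $[n]$ that do not contain any index tuple $\{i_1,\dots,i_m\}$ with $\mathcal{A}_{i_1,\dots,i_m}=0$ (where containment is in the sense of distinct-element inclusion).

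First I would fix an index $r\in[R]$ and suppose, toward a contradiction, that $\text{Supp}(\mathbf{v}^r)\not\subseteq V_k$ for every $k\in[p]$. By maximality of the $V_k$'s (Corollary \ref{thm:maximal sets}), a subset $S\subseteq[n]$ fails to be contained in some $V_k$ exactly when $S$ itself already contains a forbidden tuple. Hence there exist indices $1\le i_1\le\cdots\le i_m\le n$ with $\mathcal{A}_{i_1,\dots,i_m}=0$ and $[\{i_1,\dots,i_m\}]\subseteq\text{Supp}(\mathbf{v}^r)$.

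Next I would substitute this into the completely positive decomposition. Since each $i_j$ belongs to $\text{Supp}(\mathbf{v}^r)$, we have $v^r_{i_j}>0$ for every $j$, so the single term $v^r_{i_1}v^r_{i_2}\cdots v^r_{i_m}$ is strictly positive. All other terms in the sum
\[
\mathcal{A}_{i_1,\dots,i_m}=\sum_{s=1}^{R}v^s_{i_1}v^s_{i_2}\cdots v^s_{i_m}
\]
are nonnegative because every $\mathbf{v}^s$ is nonnegative. Therefore $\mathcal{A}_{i_1,\dots,i_m}>0$, contradicting the choice of $\{i_1,\dots,i_m\}$. This forces $\text{Supp}(\mathbf{v}^r)\subseteq V_k$ for some $k$, which is the claim.

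I do not expect a real obstacle here; the argument is essentially a one-line application of the zero entry dominance (Lemma \ref{lem:dominance}) combined with the combinatorial description of maximal cliques. The only point worth being careful about is the distinction between the multiset $\{i_1,\dots,i_m\}$ indexing a tensor entry and its underlying set of distinct elements $[\{i_1,\dots,i_m\}]$, which must match the convention used in Corollary \ref{thm:maximal sets} so that ``$\text{Supp}(\mathbf{v}^r)$ contains a forbidden tuple'' is exactly the negation of ``$\text{Supp}(\mathbf{v}^r)$ is contained in some $V_k$.''
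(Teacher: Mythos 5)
Your proof is correct and rests on the same key observation as the paper's: for indices drawn from $\text{Supp}(\mathbf{v}^r)$, the nonnegativity of all terms gives $\mathcal{A}_{i_1,\dots,i_m}\ge v^r_{i_1}\cdots v^r_{i_m}>0$. The paper phrases this directly (the support is a clique, hence lies in a maximal clique) while you phrase it contrapositively via the characterization in Corollary \ref{thm:maximal sets}, but these are the same argument.
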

\begin{proof}
	For any $\mathbf{v}^r$, $r\in[R]$, suppose that $\text{Supp}(\mathbf{v}^r)=\{i_1,\cdots,i_l\}$. Then for any $\{j_1,\cdots,j_m\}\subseteq \{i_1,\cdots,i_l\}$, $\mathcal{A}_{j_1,\cdots,j_m}\ge \mathbf{v}^r_{j_1}\cdots \mathbf{v}^r_{j_m}>0$, and $\{j_1,\cdots,j_m\}\in\mathbb{E}$. Hence, $\{i_1,\cdots,i_l\}$ is a clique and $\text{Supp}(\mathbf{v}^r)$ should be included in some maximal clique.
\end{proof}

\begin{theorem}\label{thm:equal2}
	Let $G=(V,\mathbb{E})$ be the support multi-hypergraph of $\mathcal{A}\in S^m(\mathbb{R}^n)$. $\widetilde{G}=(V,\widetilde{\mathbb{E}})$ is a multi-hypergraph such that $\mathbb{E}\subseteq\widetilde{\mathbb{E}}$. Let $\widetilde{V}_1,\cdots,\widetilde{V}_{\tilde{p}}$ denote the maximal cliques of $\widetilde{G}$, and define $\tilde{K}_k$ similarly as \cref{equ:ideal Kk}
	\[\tilde{K}_k:=\{\mathbf{y}\in\mathbb{R}^{|\widetilde{V}_k|}:\widehat{\mathbf{y}}^T\widehat{\mathbf{y}}-1=0,\widehat{y}_j\ge 0~j\in[n],\text{Supp}(\widehat{\mathbf{y}})\subseteq\widetilde{V}_k \}.\] $\mathcal{A}$ is complete positive if and only if the following generalized moment problem has a feasible solution $(\mu_1,\cdots,\mu_{\tilde{p}})$ with $\mu_k~(k\in[p])$ being finitely atomic.
	\begin{equation}\label{equ:sparse2}
		\begin{aligned}
			\widetilde{val}^{isp}:=\underset{\mu_k\in\mathcal{M}(\mathbb{R}^{|\tilde{V}_k|}),k\in[\tilde{p}]}{\min}&\sum_{k=1}^{\tilde{p}}\int F|_{\tilde{V}_k}d{\mu_k}\\
			\mbox{s.t.}&\sum_{k=1}^{\tilde{p}}\int \widehat{\mathbf{y}}^{\alpha}d{\mu_k}=\mathcal{A}_{i_1,i_2,\cdots,i_m}~(1\le i_1\le\cdots\le i_m\le n)\\
			&\text{Supp}(\mu_k)\subseteq \tilde{K}_k~(k\in[\tilde{p}]).
		\end{aligned}
	\end{equation}
\end{theorem}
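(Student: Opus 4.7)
I would prove both directions constructively, moving between completely positive decompositions of $\mathcal{A}$ and finitely atomic tuples $(\mu_1,\dots,\mu_{\tilde p})$ feasible for \cref{equ:sparse2}. The key inclusion making the argument go through is that $\mathbb{E}\subseteq\widetilde{\mathbb{E}}$, which implies every clique of $G$ is a clique of $\widetilde{G}$; in particular, each maximal clique $V_j$ of $G$ is contained in some maximal clique $\widetilde{V}_{k}$ of $\widetilde{G}$.

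For the forward direction ($\mathcal{A}$ completely positive $\Rightarrow$ \cref{equ:sparse2} feasible with atomic measures), I would start from a decomposition $\mathcal{A}=\sum_{r=1}^R(\mathbf{v}^r)^{\otimes m}$ with $\mathbf{v}^r\geq 0$. Discarding zero vectors, I would normalize by setting $\lambda_r=\|\mathbf{v}^r\|^m$ and $\mathbf{u}^r=\mathbf{v}^r/\|\mathbf{v}^r\|$, so that $(\mathbf{v}^r)^{\otimes m}=\lambda_r(\mathbf{u}^r)^{\otimes m}$ and $(\mathbf{u}^r)^T\mathbf{u}^r=1$. By \cref{lem:clique decomposition}, $\mathrm{Supp}(\mathbf{v}^r)=\mathrm{Supp}(\mathbf{u}^r)\subseteq V_{j(r)}$ for some maximal clique of $G$; choose $k(r)\in[\tilde p]$ with $V_{j(r)}\subseteq \widetilde{V}_{k(r)}$. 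Let $\mathbf{y}^r\in\mathbb{R}^{|\widetilde{V}_{k(r)}|}$ be the restriction of $\mathbf{u}^r$ to the indices in $\widetilde{V}_{k(r)}$; then $\widehat{\mathbf{y}^r}=\mathbf{u}^r$ (the coordinates outside $\widetilde{V}_{k(r)}$ already vanish), so $\mathbf{y}^r\in\tilde{K}_{k(r)}$. Set
\[
\mu_k\;:=\;\sum_{r:\,k(r)=k}\lambda_r\,\delta_{\mathbf{y}^r}\qquad(k\in[\tilde p]),
\]
which is finitely atomic. A direct computation using $\widehat{\mathbf{y}^r}=\mathbf{u}^r$ gives
\[
\sum_{k=1}^{\tilde p}\int\widehat{\mathbf{y}}^{\,\bm\alpha}\,d\mu_k=\sum_{r=1}^R\lambda_r(\mathbf{u}^r)^{\bm\alpha}=\sum_{r=1}^R(\mathbf{v}^r)^{\bm\alpha}=\mathcal{A}_{i_1,\dots,i_m},
\]
so $(\mu_1,\dots,\mu_{\tilde p})$ is feasible for \cref{equ:sparse2}.

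For the reverse direction, suppose $(\mu_1,\dots,\mu_{\tilde p})$ is feasible for \cref{equ:sparse2} with each $\mu_k$ finitely atomic. Write $\mu_k=\sum_{r=1}^{R_k}\lambda_{k,r}\,\delta_{\mathbf{y}^{k,r}}$ with $\lambda_{k,r}>0$ and $\mathbf{y}^{k,r}\in\tilde{K}_k$; by definition of $\tilde{K}_k$, each $\widehat{\mathbf{y}^{k,r}}\in\mathbb{R}^n$ is nonnegative, lies on the unit sphere, and has support contained in $\widetilde{V}_k$. Define
\[
\mathbf{v}^{k,r}\;:=\;\lambda_{k,r}^{1/m}\,\widehat{\mathbf{y}^{k,r}}\;\in\;\mathbb{R}^n_{\geq 0}.
\]
Then $(\mathbf{v}^{k,r})^{\otimes m}=\lambda_{k,r}(\widehat{\mathbf{y}^{k,r}})^{\otimes m}$, and the moment equalities in \cref{equ:sparse2} give entrywise
\[
\sum_{k=1}^{\tilde p}\sum_{r=1}^{R_k}(\mathbf{v}^{k,r})^{\otimes m}_{i_1,\dots,i_m}=\sum_{k=1}^{\tilde p}\int\widehat{\mathbf{y}}^{\,\bm\alpha}d\mu_k=\mathcal{A}_{i_1,\dots,i_m}
\]
for all $1\leq i_1\leq\cdots\leq i_m\leq n$. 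Since $\mathcal{A}$ is symmetric, this yields a completely positive decomposition of $\mathcal{A}$.

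The routine bookkeeping is the normalization and the entrywise check of moments; the only conceptually nontrivial step is the forward direction, where I need to guarantee that each rank-one term from the decomposition of $\mathcal{A}$ can be assigned to a single $\widetilde{V}_k$. This is exactly what \cref{lem:clique decomposition} buys us, together with the inclusion $\mathbb{E}\subseteq\widetilde{\mathbb{E}}$ that lets cliques of $G$ be absorbed into cliques of $\widetilde{G}$. I would expect the main obstacle to be purely notational, namely keeping straight the restriction/extension $\mathbf{y}\leftrightarrow\widehat{\mathbf{y}}$ and verifying that the sphere/nonnegativity constraints defining $\tilde{K}_k$ are preserved under this operation; once these are written out carefully, the rest is an identification of sums.
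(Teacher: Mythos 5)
Your proposal is correct and follows essentially the same route as the paper: both directions are handled constructively, the reverse direction deatomizes the measures into a rank-one nonnegative decomposition, and the forward direction invokes \cref{lem:clique decomposition} together with the fact that $\mathbb{E}\subseteq\widetilde{\mathbb{E}}$ forces every maximal clique of $G$ into some $\widetilde{V}_k$. The only cosmetic difference is that you assign each decomposition vector to a clique via an explicit choice function $k(r)$, whereas the paper uses a disjoint partition $\Lambda_1,\dots,\Lambda_{\tilde p}$ (first clique containing the support); both serve the same purpose of avoiding double counting.
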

\begin{proof}
	The sufficiency is obvious. Suppose that $(\mu_1,\cdots,\mu_{\tilde{p}})$ is a feasible solution of \cref{equ:sparse2}, and $\mu_k=\sum_{i=1}^{r_k}\lambda_{i}^k\delta_{\mathbf{y}^k_i}$, $\mathbf{y}^k_i\in \tilde{K}_k\subseteq\mathbb{R}^{|\widetilde{V}_k|}$, $\lambda_i\ge 0$. It leads that \[\mathcal{A}=\sum_{k=1}^{\tilde{p}}\sum_{i=1}^{r_k}\lambda_i^k(\widehat{\mathbf{y}}^k_i)^{\otimes m},\]
	where $\widehat{\mathbf{y}}^k_i$ is nonnegative and $\|\widehat{\mathbf{y}}^k_i\|=\|\mathbf{y}^k_i\|=1$. Hence, $\mathcal{A}$ is complete positive.
	
	If $\mathcal{A}$ is complete positive. Suppose that $\mathcal{A}=\sum_{r=1}^R\lambda_r(\mathbf{v}^r)^{\otimes m}$ with $\|\mathbf{v}^r\|=1$ and $\mathbf{v}^r~(r\in[R])$ are nonnegative. Then for any $\mathbf{v}^r$, $\mathbf{v}^r\in\widehat{K}_k$ follows from \cref{lem:clique decomposition}. That is, $\mathbf{v}^r\in K$ and $\text{Supp}(\mathbf{v}^r)\subseteq V_i$. Since each maximal clique of $G$ is contained in a maximal clique of $\widetilde{G}$, we have $\text{Supp}(\mathbf{v}^r)\in \tilde{V}_k$ for some $k\in[\tilde{p}]$. Then, 
	\[\mathcal{A}=\sum_{r=1}^R\lambda_r(\widehat{\mathbf{y}}^r)^{\otimes m},\]
	where for each $r\in [R]$, $\widehat{\mathbf{y}}^r=\mathbf{v}^r$ and $\mathbf{y}^r\in \tilde{K}_k$ for some $k$. Define the set
	\[\Lambda_k=\{\mathbf{x}\in \mathbb{R}^n:\text{Supp}(\mathbf{x})\subseteq \widetilde{V}_k, \text{Supp}(\mathbf{x})\not\subseteq \widetilde{V}_h \text{ for } 1\le h\le k-1\}~(k\in[\tilde{p}]).\] Consider \[\mu_k=\sum_{\{r:\widehat{\mathbf{y}}^r\in \Lambda_k\}}\lambda_r\delta_{{\mathbf{y}}^r}~k\in[\tilde{p}],\]
	we have $\text{Supp}(\mu_k)\subseteq \tilde{K}_k$ and $(\mu_1,\cdots,\mu_{\tilde{p}})$ is a feasible solution of \cref{equ:sparse2}.
\end{proof}

In particular, we define the following ideal-sparse generalized moment problem reformulation for \cref{equ:dense ideal}:
\begin{equation}\label{equ:sparse GMP}
	\begin{aligned}
		val^{isp}:=\underset{\mu_k\in\mathcal{M}(\mathbb{R}^{|V_k|}),k\in[p]}{\min}&\sum_{k=1}^p\int F|_{V_k}d{\mu_k}\\
		\mbox{s.t.}&\sum_{k=1}^p\int \widehat{\mathbf{y}}^{\bm{\alpha}}d{\mu_k}=\mathcal{A}_{i_1,i_2,\cdots,i_m}~(\{i_1,\cdots,i_m\}\in\mathbb{E})\\
		&\text{Supp}(\mu_k)\subseteq K_k~(k\in[p]).
	\end{aligned}
\end{equation}
Here, we use the superscript `isp' to indicate that the ideal-sparsity is exploited.
\begin{lemma}\label{lem:partite}
	$K=\widehat{K_1}\cup\cdots\cup\widehat{K_p}$
\end{lemma}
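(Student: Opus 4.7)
The inclusion $\widehat{K_1} \cup \cdots \cup \widehat{K_p} \subseteq K$ is immediate from the definition $\widehat{K_k} = \{\mathbf{x} \in K : \operatorname{Supp}(\mathbf{x}) \subseteq V_k\} \subseteq K$, so the real content is the reverse inclusion. My plan is to fix an arbitrary $\mathbf{x} \in K$, set $U := \operatorname{Supp}(\mathbf{x}) \subseteq [n]$, and produce an index $k \in [p]$ with $U \subseteq V_k$.

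The key observation, which I would establish first, is that $U$ avoids every forbidden multiset. Indeed, for any non-edge $\{i_1,\ldots,i_m\} \in \bar{\mathbb{E}}$, the definition of $K$ in \eqref{equ:ideal K} forces $x_{i_1} \cdots x_{i_m} = 0$, and hence at least one of the factors $x_{i_j}$ vanishes, i.e.\ $i_j \notin U$. Rephrased in terms of distinct entries, this means $[\{i_1,\ldots,i_m\}] \not\subseteq U$ for every $\{i_1,\ldots,i_m\} \in \bar{\mathbb{E}}$.

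Once this is in place, I invoke \cref{thm:maximal sets} (i.e.\ the characterization from \cref{alg:clique}), which says that $V_1,\ldots,V_p$ are precisely the maximal subsets of $[n]$ that do not contain any $\{i_1,\ldots,i_m\}$ with $\mathcal{A}_{i_1,\ldots,i_m} = 0$ (equivalently, with $\{i_1,\ldots,i_m\} \in \bar{\mathbb{E}}$). Since $U$ is itself such a subset, it is contained in some maximal such subset $V_k$. Then $\operatorname{Supp}(\mathbf{x}) \subseteq V_k$ together with $\mathbf{x} \in K$ yields $\mathbf{x} \in \widehat{K_k}$, completing the proof.

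The only subtle point that I will need to be careful about is the distinction between a multiset $\{i_1,\ldots,i_m\}$ (possibly with repeated entries) and its underlying set $[\{i_1,\ldots,i_m\}]$: the statement ``$U$ does not contain $\{i_1,\ldots,i_m\}$'' should be interpreted, consistently with \cref{def:clique} and \cref{thm:maximal sets}, as $[\{i_1,\ldots,i_m\}] \not\subseteq U$. This is exactly what the support argument above provides, so no additional work is required beyond stating this interpretation carefully. The argument is essentially short; the main work was already done in establishing \cref{thm:maximal sets}.
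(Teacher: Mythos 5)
Your proposal is correct and follows essentially the same route as the paper: the easy inclusion from $\widehat{K_k}\subseteq K$, and for the reverse, the observation that $x_{i_1}\cdots x_{i_m}=0$ forces some factor to vanish, so $\operatorname{Supp}(\mathbf{x})$ contains no forbidden index set and is therefore contained in one of the maximal subsets $V_k$. Your extra care about multisets versus underlying sets is a reasonable clarification but does not change the argument.
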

\begin{proof}
	Since $\widehat{K_k}\subseteq K$, $\widehat{K_1}\cup\cdots\cup\widehat{K_p}\subseteq K$. On the other hand, for any $x\in K$, $\text{Supp}(x)$ should not contain any set $\{i_1,\cdots,i_m\}$ with $\mathcal{A}_{i_1,\cdots,i_m}=0$. Otherwise, there exists $\{i_1,\cdots,i_m\}\subseteq \text{Supp(x)}$ such that $\{i_1,\cdots,i_m\}\in\bar{E}$ and $x_{i_1}\cdots x_{i_m}=0$. It leads that $x_{i_j}=0$ for some $i_j\in \text{Supp(x)}$, which conflicts. Thus, $\text{Supp}(x)\subseteq V_k$ and $x\in\widehat{K}_k$ for some $k\in[p]$. Therefore, $K\subseteq \widehat{K_1}\cup\cdots\cup\widehat{K_p}$. Hence, $K=\widehat{K_1}\cup\cdots\cup\widehat{K_p}$.
\end{proof}

We can also establish an equivalence between the dense problem \cref{equ:dense GMP} and the sparse reformuation \cref{equ:sparse GMP}, following the procedure similar to those of Korda et al.\cite{korda2024exploiting}. For completeness, we still provide a detailed proof for our case.
\begin{theorem}\label{thm:equal1}
	Problems \cref{equ:dense GMP} and \cref{equ:sparse GMP} are equivalent in the sense that their optimum values are equal, that is, $val=val^{isp}$.
\end{theorem}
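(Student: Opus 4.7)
The plan is to establish $val = val^{isp}$ by proving both inequalities, using \cref{lem:partite} as the bridge between the single-measure formulation on $K$ and the multi-measure formulation on the projections $K_k$.

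For the inequality $val^{isp} \le val$, I would take any feasible $\mu$ for \cref{equ:dense GMP} and split it along the covering $K = \widehat{K_1} \cup \cdots \cup \widehat{K_p}$ from \cref{lem:partite}. The natural move is to disjointify this covering by setting $\Lambda_k := \{\mathbf{x} \in K : \text{Supp}(\mathbf{x}) \subseteq V_k, \text{Supp}(\mathbf{x}) \not\subseteq V_h \text{ for all } h < k\}$, so the $\Lambda_k$ partition $K$. Define $\mu_k$ as the pushforward of $\mu|_{\Lambda_k}$ under the coordinate projection $\mathbf{x} \mapsto \mathbf{x}(V_k) \in \mathbb{R}^{|V_k|}$. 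Because every $\mathbf{x} \in \Lambda_k$ satisfies $\text{Supp}(\mathbf{x}) \subseteq V_k$, one has $\widehat{\mathbf{x}(V_k)} = \mathbf{x}$, so $\text{Supp}(\mu_k) \subseteq K_k$, and for every polynomial $q$, $\int q|_{V_k}\, d\mu_k = \int_{\Lambda_k} q\, d\mu$. Summing over $k$ then yields $\sum_k \int F|_{V_k}\, d\mu_k = \int F\, d\mu$ and $\sum_k \int \widehat{\mathbf{y}}^{\bm{\alpha}}\, d\mu_k = \int \mathbf{x}^{\bm{\alpha}}\, d\mu = \mathcal{A}_{i_1,\ldots,i_m}$ for every $\{i_1,\ldots,i_m\} \in \mathbb{E}$, which shows $(\mu_1,\ldots,\mu_p)$ is feasible for \cref{equ:sparse GMP} with the same objective.

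For the reverse inequality $val \le val^{isp}$, I would take any feasible tuple $(\mu_1,\ldots,\mu_p)$ and assemble them into a single measure on $\mathbb{R}^n$ by $\mu := \sum_{k=1}^p \widehat{\mu}_k$, where $\widehat{\mu}_k$ is the pushforward of $\mu_k$ under the embedding $\mathbf{y} \mapsto \widehat{\mathbf{y}}$. Since $\text{Supp}(\mu_k) \subseteq K_k$ gives $\text{Supp}(\widehat{\mu}_k) \subseteq \widehat{K_k} \subseteq K$, we have $\text{Supp}(\mu) \subseteq K$. For indices $\{i_1,\ldots,i_m\} \in \mathbb{E}$, the moment constraints of \cref{equ:sparse GMP} directly give $\int \mathbf{x}^{\bm{\alpha}}\, d\mu = \sum_k \int \widehat{\mathbf{y}}^{\bm{\alpha}}\, d\mu_k = \mathcal{A}_{i_1,\ldots,i_m}$. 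For $\{i_1,\ldots,i_m\} \in \bar{\mathbb{E}}$, the support condition $\text{Supp}(\mu) \subseteq K$ together with $x_{i_1}\cdots x_{i_m} = 0$ on $K$ forces $\int \mathbf{x}^{\bm{\alpha}}\, d\mu = 0 = \mathcal{A}_{i_1,\ldots,i_m}$ by \cref{lem:dominance}, so $\mu$ is feasible for \cref{equ:dense GMP} and achieves the same objective $\int F\, d\mu = \sum_k \int F|_{V_k}\, d\mu_k$.

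I do not expect a serious obstacle here, since the construction is a direct analogue of the quadratic case in \cite{korda2024exploiting}. The one subtle point to handle carefully is the measurability of the sets $\Lambda_k$ and of the coordinate projections, plus verifying that both the objective $F$ and the monomial constraints respect the identity $f(\mathbf{x}) = f|_{V_k}(\mathbf{x}(V_k))$ whenever $\text{Supp}(\mathbf{x}) \subseteq V_k$; this last identity is exactly the definition $f|_U(\mathbf{y}) = f(\widehat{\mathbf{y}})$ from the notation section, so the computation is routine once the partition $\{\Lambda_k\}$ is in place. The fact that the moment equations in \cref{equ:sparse GMP} are only listed for $\{i_1,\ldots,i_m\} \in \mathbb{E}$ rather than all index tuples is reconciled by the support condition $\text{Supp}(\mu_k) \subseteq K_k$ combined with the zero-entry dominance of $\mathcal{A}$, which is what ensures the sparse constraints capture the full dense constraint system.
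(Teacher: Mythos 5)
Your proof is correct and follows essentially the same route as the paper: both directions use exactly the same constructions (disjointifying the cover from \cref{lem:partite} into the sets $\Lambda_k$ and pushing $\mu|_{\Lambda_k}$ forward to get the $\mu_k$, and summing the embedded pushforwards $\widehat{\mu}_k$ for the converse). The only small slip is attributing $\int \mathbf{x}^{\bm{\alpha}}\,d\mu = 0 = \mathcal{A}_{i_1,\ldots,i_m}$ for nonedges to \cref{lem:dominance}; the actual reason is simply that $\mathcal{A}$ vanishes on nonedges by definition of the support multi-hypergraph and the monomial vanishes on $\bigcup_k \widehat{K_k}$, which does not affect the validity of the argument.
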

\begin{proof}
	Since \eqref{equ:dense GMP} is equal to \eqref{equ:dense ideal}, we only need to prove the equivalence between \eqref{equ:dense ideal} and \eqref{equ:sparse GMP}. First, assume $(\mu_1,\cdots,\mu_p)$ is feasible for problem \eqref{equ:sparse GMP}. We can construct a measure $\mu\in\mathcal{M}(\mathbb{R}^n)$, defined by $\int fd\mu=\sum_{k=1}^p\int_{K_k}f|_{V_k}d\mu_k$ for any measurable function $f$ on $\mathbb{R}^n$. We have  
	\[\int fd\mu=\sum_{k=1}^p\int_{K_k}f|_{V_k}d\mu_k=\sum_{k=1}^p\int_{K_k}f|_{V_k}\chi^K|_{V_k}d\mu_k=\int f\chi^Kd\mu=\int_{K}fd\mu,\]
	where $\chi^K|_{V_k}(\mathbf{y})=\chi^K(\widehat{\mathbf{y}})=1$ for all $\mathbf{y}\in K_k$. Then, $\text{Supp}(\mu)\subseteq K$.
	In particular,
	\[\int \mathbf{x}^{\bm{\alpha}}d\mu=\sum_{k=1}^p\int_{K_k}\widehat{\mathbf{x}(V_k)}^{\bm{\alpha}}d\mu_k=\sum_{k=1}^p\int\widehat{\mathbf{y}}^{\bm{\alpha}}d\mu_k=\mathcal{A}_{i_1,\cdots,i_m}\]
	for any $\{i_1,\cdots,i_m\}\in \mathbb{E}$. As for $\{i_1,\cdots,i_m\}\in\bar{\mathbb{E}}$, $\int\mathbf{x}^{\bm{\alpha}}d\mu=\sum_{k=1}^p\int\widehat{\mathbf{y}}^{\bm{\alpha}}d\mu_k=0$. Hence, $\mu$ is feasible for \eqref{equ:dense ideal}.
	
	On the other hand, assume $\mu$ is feasible for \cref{equ:dense ideal}. For $k\in[p]$, define the set
	\[\Lambda_k=\{\mathbf{x}\in K:\text{Supp}(\mathbf{x})\subseteq V_k, \text{Supp}(\mathbf{x})\not\subseteq V_h \text{ for } 1\le h\le k-1\}\subseteq \widehat{K_k}.\]
	Since for any $\mathbf{x}\in K$, $\text{Supp}(\mathbf{x})\subseteq V_k$ for some $k\in[p]$ according to \cref{lem:partite}, the sets $\Lambda_1,\cdots,\Lambda_p$ form a disjoint partition of $K$. We can construct the measure $\mu_k\in\mathcal{M}(\mathbb{R}^{|V_k|})$, given by $\int fd\mu_k=\int_{\Lambda_k}f(\mathbf{x}(V_k))d\mu$ for any measurable function $f$ on $\mathbb{R}^{|V_k|}$. We have 
	\[\int fd\mu_k=\int_{\Lambda_k}f(\mathbf{x}(V_k))d\mu=\int_{\Lambda_k}f(\mathbf{x}(V_k))\chi^{K_k}(\mathbf{x}(V_k))d\mu=\int f\chi^{K_k}d\mu_k=\int_{K_k}fd\mu_k,\]
	then $\text{Supp}(\mu_k)\subseteq K_k$. We also have
	\[\sum_{k=1}^p\int\widehat{\mathbf{y}}^{\bm{\alpha}}d\mu_k=\sum_{k=1}^p\int_{\Lambda_k}\widehat{\mathbf{x}(V_k)}^{\bm{\alpha}}d\mu=\sum_{k=1}^p\int_{\Lambda_k}\mathbf{x}^{\bm{\alpha}}d\mu=\int_{K}\mathbf{x}^{\alpha}d\mu=\mathcal{A}_{i_1,\cdots,i_m}\]
	for any $\{i_1,\cdots,i_m\}\in\mathbb{E}$, where the third equality follows from that $\Lambda_1,\cdots,\Lambda_p$ disjointly partition $K$. Hence, $(\mu_1,\cdots,\mu_k) $ is feasible for \cref{equ:sparse GMP}.
\end{proof}

\subsection{Ideal-sparse moment relaxations}
This subsection studies the ideal-sparse moment relaxations for solving the ideal-sparse generalized moment problem reformulation.  
Based on the ideal-sparse reformulation \cref{equ:sparse GMP}, we can define the ideal-sparse moment hierarchies for \cref{equ:dense GMP}:
\begin{equation}\label{equ:sparse relax}
	\begin{aligned}
		\xi_t^{isp}:=\underset{\mathbf{z}_k\in\mathbb{R}^{\mathbb{N}^{|V_k|}_{2t}},k\in[p]}{\min}&\sum_{k=1}^p\langle F|_{V_k},\mathbf{z}_k\rangle\\
		\mbox{s.t.}&\sum_{\{k\in[p]:i_j\in V_k,j\in[m]\}}\langle\widehat{\mathbf{y}}^{\bm\alpha},\mathbf{z}_k\rangle=\mathcal{A}_{i_1,i_2,\cdots,i_m}~(\{i_1,\cdots,i_m\}\in \mathbb{E})\\
		&M_t(\mathbf{z}_k)\succeq 0~(k\in[p])\\
		&L_{x_j}^{(t)}(\mathbf{z}_k)\succeq 0~(j\in V_k,k\in[p])\\
		&L_{\mathbf{x}(V_k)^T\mathbf{x}(V_k)-1}^{(t)}(\mathbf{z}_k)=0, ~(k\in[p]).
	\end{aligned}
\end{equation}
And the dual optimization of \eqref{equ:sparse relax} is 
\begin{equation}\label{equ:sparse dual}
	\begin{aligned}
		\underset{P_k\in\mathbb{R}[\mathbf{x}(V_k)]_{\mathcal{A}}}{\max}&\sum_{k=1}^p\langle P_k,\mathbf{w}\rangle\\
		\text{s.t.}&F|_{V_k}-P_k\in I((\mathbf{x}(V_k)^T\mathbf{x}(V_k)-1))_{2t}+Q((x_j)_{j\in V_k})_{t}, k\in[p],
	\end{aligned}
\end{equation}
where $\mathbb{R}[\mathbf{x}(V_k)]_{\mathcal{A}}$ denotes the set of polynomials in the form of $\sum_{\bm{\alpha}}P_{\bm{\alpha}}\widehat{\mathbf{x}(V_k)}^{\bm{\alpha}}$ and  $\mathbf{w}$ is the same as defined in \cref{equ:dense dual}. 

We refer to the above hierarchy of parameter $\xi_t^{isp}$ as ideal-sparse moment hierarchy.
Hence, for the completely positive tensor decomposition problem, if the tensor $\mathcal{A}\in S^n(\mathbb{R}^n)$ is sparse, the computation of ideal-sparse moment hierarchys \cref{equ:sparse relax} will be much faster than \cref{equ:dense relax}, since $|V_k|$ may be much smaller than $n$. Let $\Omega(\xi_t)$ and $\Omega(\xi_t^{isp})$ denote the feasible sets of \cref{equ:dense relax} and \cref{equ:sparse relax}, respectively. It is obvious that $\Omega(\xi_t^{isp})\subseteq \Omega(\xi_t)$ as stated in the following lemma. 
\begin{lemma}\label{lem:include}
	If $(\mathbf{z}_1,\cdots,\mathbf{z}_p)$ is feasible for \cref{equ:sparse relax}, then $\mathbf{z}\in\mathbb{R}^{\mathbb{N}^n_{2t}}$ is feasible for $\cref{equ:dense relax}$ with the same objective value, given by $\mathbf{z}_{\bm{\alpha}}=\sum_{k=1}^p\langle \mathbf{x}^{\bm{\alpha}}|_{V_k}, \mathbf{z}_k\rangle$. Hence, $\xi_t\le\xi_t^{isp}$.
\end{lemma}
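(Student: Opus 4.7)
The approach is to take the candidate $\mathbf{z}\in\mathbb{R}^{\mathbb{N}^n_{2t}}$ given by $z_{\bm{\alpha}}=\sum_{k=1}^{p}\langle \mathbf{x}^{\bm{\alpha}}|_{V_k},\mathbf{z}_k\rangle$ and verify, constraint by constraint, that it lies in the feasible set of \cref{equ:dense relax} with the same objective value as the given $(\mathbf{z}_1,\ldots,\mathbf{z}_p)$. The one computational workhorse is the linearity identity
\[
\langle q,\mathbf{z}\rangle=\sum_{k=1}^{p}\langle q|_{V_k},\mathbf{z}_k\rangle\qquad\text{for every }q\in\mathbb{R}[\mathbf{x}]_{2t},
\]
which follows from the definition of $\mathbf{z}$ by expanding $q$ in monomials and swapping the two summations. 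Applying it with $q=F$ yields the equality of objective values.

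To verify the moment equations $z_{\bm{\alpha}}=\mathcal{A}_{i_1,\cdots,i_m}$ for every $1\le i_1\le\cdots\le i_m\le n$, I would rely on the observation that $\mathbf{x}^{\bm{\alpha}}|_{V_k}\equiv 0$ whenever $[\{i_1,\cdots,i_m\}]\not\subseteq V_k$. When $\{i_1,\cdots,i_m\}\in\mathbb{E}$, the indices $k$ contributing nonzero summands coincide with $\{k\in[p]:i_j\in V_k~\forall j\in[m]\}$, so the moment equation becomes identical to the corresponding constraint in \cref{equ:sparse relax}. When $\{i_1,\cdots,i_m\}\in\bar{\mathbb{E}}$, no maximal clique $V_k$ contains $[\{i_1,\cdots,i_m\}]$ by \cref{thm:maximal sets}, so the whole sum vanishes and matches $\mathcal{A}_{i_1,\cdots,i_m}=0$.

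The PSD and annihilation constraints are then handled through their quadratic-form characterizations. For any polynomial $h$ of appropriate degree, applying the workhorse identity with $q=h^2$, $q=x_jh^2$, and $q=(\mathbf{x}^T\mathbf{x}-1)h^2$, together with $(gh)|_{V_k}=g|_{V_k}\cdot h|_{V_k}$, $x_j|_{V_k}=0$ for $j\notin V_k$, and $(\mathbf{x}^T\mathbf{x})|_{V_k}=\mathbf{x}(V_k)^T\mathbf{x}(V_k)$, gives
\begin{align*}
\mathrm{vec}(h)^T M_t(\mathbf{z})\mathrm{vec}(h)&=\sum_{k=1}^p\mathrm{vec}(h|_{V_k})^T M_t(\mathbf{z}_k)\mathrm{vec}(h|_{V_k})\ge 0,\\
\mathrm{vec}(h)^T L_{x_j}^{(t)}(\mathbf{z})\mathrm{vec}(h)&=\sum_{k:j\in V_k}\mathrm{vec}(h|_{V_k})^T L_{x_j}^{(t)}(\mathbf{z}_k)\mathrm{vec}(h|_{V_k})\ge 0,\\
\mathrm{vec}(h)^T L_{\mathbf{x}^T\mathbf{x}-1}^{(t)}(\mathbf{z})\mathrm{vec}(h)&=\sum_{k=1}^p\mathrm{vec}(h|_{V_k})^T L_{\mathbf{x}(V_k)^T\mathbf{x}(V_k)-1}^{(t)}(\mathbf{z}_k)\mathrm{vec}(h|_{V_k})=0.
\end{align*}
The first two lines establish the PSD constraints on $M_t(\mathbf{z})$ and on each $L_{x_j}^{(t)}(\mathbf{z})$, while the last, combined with the symmetry of $L_{\mathbf{x}^T\mathbf{x}-1}^{(t)}(\mathbf{z})$ and a standard polarization argument, forces that matrix to vanish.

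The step requiring most care is bookkeeping the restriction operator $(\cdot)|_{V_k}$ through each product of polynomials and, in particular, confirming that $\mathbf{x}^{\bm{\alpha}}|_{V_k}$ is identically zero exactly when $\mathrm{supp}(\bm{\alpha})\not\subseteq V_k$; once this is settled, each of the verifications above reduces to a routine computation. The inequality $\xi_t\le\xi_t^{isp}$ then follows by taking infima over the respective feasible sets.
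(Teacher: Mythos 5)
Your proposal is correct and follows essentially the same route as the paper's proof: construct $\mathbf{z}$ entrywise by summing over the cliques, verify the moment equations by splitting into $\{i_1,\cdots,i_m\}\in\mathbb{E}$ versus $\bar{\mathbb{E}}$ (using that no $V_k$ contains the support of a nonedge), and transfer the localizing-matrix constraints via the quadratic-form identity $\mathrm{vec}(h)^T L_q^{(t)}(\mathbf{z})\mathrm{vec}(h)=\sum_k \mathrm{vec}(h|_{V_k})^T L_{q|_{V_k}}^{(t)}(\mathbf{z}_k)\mathrm{vec}(h|_{V_k})$. Your additional remarks (the explicit polarization step for the equality constraint and the check that $\mathbf{x}^{\bm{\alpha}}|_{V_k}\equiv 0$ exactly when $\mathrm{supp}(\bm{\alpha})\not\subseteq V_k$) only spell out details the paper leaves implicit.
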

\begin{proof}
	From the construction of $\mathbf{z}$, we have for any $\{i_1,\cdots,i_m\}\in \mathbb{E}$, 
	\[\mathbf{z}_{\bm{\alpha}}=\sum_{k=1}^p\langle \mathbf{x}^{\bm{\alpha}}|_{V_k}, \mathbf{z}_k\rangle=\sum_{\{k\in[p]:i_j\in V_k,j\in[m]\}}\langle\widehat{\mathbf{y}}^{\bm\alpha},\mathbf{z}_k\rangle=\mathcal{A}_{i_1,i_2,\cdots,i_m}.\]
	And $\mathbf{z}_{\bm{\alpha}}=0$ for any $\{i_1,\cdots,i_m\}\in\bar{\mathbb{E}}$, since $V_k$ contains no such $\{i_1,\cdots,i_m\}$ for any $k\in[p]$. We also have for any $q\in\mathbb{R}[\mathbf{x}]_{2t}$ and $p\in\mathbb{R}[\mathbf{x}]_{t-\lceil\text{deg}(q)/2\rceil}$,
	\begin{equation*}
		\begin{aligned}
	vec(p)^TL^{(t)}_{q}(\mathbf{z})vec(p)&=\langle qp^2,\mathbf{z}\rangle\\
	&=\sum_{k=1}^p\langle qp^2|_{V_k},\mathbf{z}_k\rangle=\sum_{k=1}^p vec(p|_{V_k})^TL^{(t)}_{q|_{V_k}}(\mathbf{z}_k)vec(p|_{V_k}).
	\end{aligned}
	\end{equation*}
Then it follows that $\mathbf{z}$ is peasible for \cref{equ:dense relax} and $\xi_t\le \xi_t^{isp}$.
\end{proof}

\begin{example}
	Consider the tensor $\mathcal{A}\in S^3(\mathbb{R}^3)$ given in \cref{exa:example}. The maximal cliques are $V_1=\{1,3\}$ and $V_2=\{1,2\}$. Suppose that the relaxation level $t=2$ and $(\mathbf{z}_1,\mathbf{z}_2)$ is feasible for \cref{equ:sparse relax} with $\mathbf{z}_1,\mathbf{z}_2\in\mathbb{R}^{\mathbb{N}^2_4}$. Then a feasible point $\mathbf{z}\in\mathbb{R}^{\mathbb{N}^3_4}$ for \cref{equ:dense relax} can be constructed as 
	\[z_{300}= (\mathbf{z}_1)_{30}+(\mathbf{z}_2)_{30},z_{210}=(\mathbf{z}_2)_{21},z_{201}=(\mathbf{z}_1)_{21},z_{120}=(\mathbf{z}_2)_{12},z_{111}=0,\]
	\[z_{102}=(\mathbf{z}_1)_{12},z_{030}=(\mathbf{z}_2)_{03},z_{021}=0,z_{012}=0,z_{003}=(\mathbf{z}_1)_{03}.\]
	Here we only list the elements of $\mathbf{z}$ corresponding to $\mathcal{A}$.
\end{example}

\begin{remark}
	Korda et al. \cite{korda2024exploiting} have pointed out that one possible drawback of the ideal-sparse moment relaxation \cref{equ:sparse relax} is that the number of maximal cliques of graph $G$ could be large. Then one can consider the extension $\widetilde{G}$ of $G$. Following this, we can define the corresponding ideal-sparse moment relaxation for \cref{equ:sparse2}:
	\begin{equation}\label{equ:sparse relax2}
		\begin{aligned}
			\widetilde{\xi}_t^{isp}:=\underset{\mathbf{z}_k\in\mathbb{R}^{\mathbb{N}^{|\widetilde{V}_k|}_{2t}},k\in[\tilde{p}]}{\min}&\langle \sum_{k=1}^pF|_{\widetilde{V}_k},\mathbf{z}_k\rangle\\
			\mbox{s.t.}&\sum_{\{k\in[p]:i_j\in \widetilde{V}_k,j\in[m]\}}\langle\widehat{\mathbf{y}}^{\bm\alpha},\mathbf{z}_k\rangle=\mathcal{A}_{i_1,i_2,\cdots,i_m}~(1\le i_1\le\cdots\le i_m)\\
			&M_t(\mathbf{z}_k)\succeq 0~(k\in[\tilde{p}])\\
			&L_{x_j}^{(t)}(\mathbf{z}_k)\succeq 0~(j\in \widetilde{V}_k,k\in[\tilde{p}])\\
			&L_{\mathbf{x}(\widetilde{V}_k)^T\mathbf{x}(\widetilde{V}_k)-1}^{(t)}(\mathbf{z}_k)=0, ~(k\in[\tilde{p}]).
		\end{aligned}
	\end{equation}
\cref{equ:sparse relax2} only partially expolits the ideal sparsity, while involves less cliques but with larger sizes. However, for a support multi-hypergraph, it may be necessary to first address how to obtain the extension such as a chordal extension. We leave it for further research. Nonetheless, in our numerical experiments (see \cref{sec:numerical} please), it seems that the number of maximal cliques of \cref{equ:sparse relax} is acceptable, considering the advantages of fully exploiting the sparsity.
\end{remark}

Let $t_0=\max\{\lceil \text{deg}(F)/2\rceil,\lceil m/2\rceil,1\}$ and set $d_K=\max\{1,\lceil \text{deg}(x_j)/2\rceil~(j\in[n]),\lceil\text{deg}(\mathbf{x}^T\mathbf{x}-1)/2\rceil\}$. Similar to \cref{thm:flat}, if the optimal solution $(\mathbf{z}_1,\cdots,\mathbf{z}_p)$ of the ideal-sparse moment relaxation \eqref{equ:sparse relax} satisfies the following flatness condition:
\begin{equation}\label{equ:multi-rank}
	rank~M_{s_k}(\mathbf{z}_k)=rank~M_{s_k-d_K}(\mathbf{z}_k)=:r_k
\end{equation} 
for some integer $s_k$ such that $t_0\le s_k\le t$ for each $k\in[p]$. Then, equality $\xi_t^{isp}=val^{isp}$ holds, and the reformulation \eqref{equ:sparse GMP} has an optimal solution $(\mu_1,\cdots,\mu_p)$, where each $\mu_k$ is $r_k$-atomic.

Based on the flatness condition \cref{equ:multi-rank}, we can obtain the convergence theorem and properties of ideal-sparse moment relaxations \cref{equ:sparse relax} similar to those of dense moment relaxations \cref{equ:dense relax}. The convergence is obtained under the following assumption. Please refer to \cite{nie2014truncated} regarding the generic property.
\begin{assumption}\label{assump:F}
	The randomly generated $F$ of \cref{equ:sparse relax} is generic in $\Sigma[\mathbf{x}]_d$, $F\in int(\Sigma[\mathbf{x}]_d)$ and $F|_{V_k}\in int(\Sigma[\mathbf{x}(V_k)]_d)$ for $k\in[p]$.
\end{assumption}
\begin{theorem}[Convergence analysis]\label{thm:checkable}
	Suppose $\mathcal{A}\in S^m(\mathbb{R}^n)$. Let $d>m$, the ideal-sparse moment hierarchies has the following properties:
	\begin{enumerate}
		\item If \cref{equ:sparse relax} is infeasible for some $t$, then $\mathcal{A}$ is not completely positive.
		\item If $\mathcal{A}$ is not completely positive, then \cref{equ:sparse relax} is infeasible for all $t$ big enough.
		\item If $\mathcal{A}$ is completely positive and \cref{assump:F} holds, \cref{equ:sparse relax} has an optimizer $(\mathbf{z}^{*,t}_1,\cdots,\mathbf{z}^{*,t}_p)$ for all $t\ge d/2$. 
		\item For all $s$ big enough, the sequence $\{\mathbf{z}^{*,t}|_{2s}\}_t$, where $\mathbf{z}^{*,t}_{\bm{\alpha}}=\sum_{k=1}^p\langle \mathbf{x}^{\bm{\alpha}}|_{V_k}, \mathbf{z}_k^{*,t}\rangle$, is bounded and all its accumulation points are flat.
		\item If \cref{equ:dense relax} is finite convergent, then \cref{equ:sparse relax} is also finite convergent. That is, \cref{equ:sparse relax} has an optimizer $(\mathbf{z}^{*,t}_1,\cdots,\mathbf{z}^{*,t}_p)$ satisfying the flatness condition \cref{equ:multi-rank} for $t$ big enough.
	\end{enumerate}
\end{theorem}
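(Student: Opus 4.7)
The plan is to mirror the five parts of Theorem \ref{thm:nie} through two bridges to the dense setting: the equality $val=val^{isp}$ from Theorem \ref{thm:equal1} and the lift $\mathbf{z}^{*,t}_{\bm\alpha}:=\sum_{k=1}^p\langle\mathbf{x}^{\bm\alpha}|_{V_k},\mathbf{z}^{*,t}_k\rangle$ from Lemma \ref{lem:include}, which embeds the sparse feasible set into the dense one and gives the inequality $\xi_t\le\xi_t^{isp}$ for every $t$. Items 1 and 2 are then essentially formal. If $\mathcal{A}$ is completely positive, Theorem \ref{thm:equal1} produces a feasible tuple $(\mu_1,\dots,\mu_p)$ for \cref{equ:sparse GMP} whose truncated moment vectors are feasible for \cref{equ:sparse relax} at every $t$; contrapositively, infeasibility of \cref{equ:sparse relax} at any $t$ forces $\mathcal{A}$ to be non-CP. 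Conversely, if $\mathcal{A}$ is not CP, Theorem \ref{thm:nie}(2) forces \cref{equ:dense relax} to be infeasible for all large $t$, and Lemma \ref{lem:include} propagates this infeasibility to \cref{equ:sparse relax}.

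For item 3, feasibility at $t\ge d/2$ when $\mathcal{A}$ is CP comes from Lemma \ref{lem:clique decomposition}: every atom of any CP decomposition is supported in some $V_k$, so routing each normalized atom to its clique produces sparse atomic measures whose truncations are feasible. Existence of an optimizer exploits Assumption \ref{assump:F}: since $F|_{V_k}\in\mathrm{int}(\Sigma[\mathbf{x}(V_k)]_d)$, there is $\epsilon_k>0$ such that $F|_{V_k}-\epsilon_k\sum_{|\bm\beta|\le t}\mathbf{x}(V_k)^{2\bm\beta}\in\Sigma[\mathbf{x}(V_k)]_d$, which combined with $M_t(\mathbf{z}_k)\succeq 0$ yields $\langle F|_{V_k},\mathbf{z}_k\rangle\ge\epsilon_k\,\mathrm{tr}(M_t(\mathbf{z}_k))$. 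The sphere localizer $L^{(t)}_{\mathbf{x}(V_k)^T\mathbf{x}(V_k)-1}(\mathbf{z}_k)=0$ together with $L^{(t)}_{x_j}(\mathbf{z}_k)\succeq 0$ and the moment constraints force the trace to be controlled by the data $\mathcal{A}$, giving compact sublevel sets and hence a minimizer via Weierstrass.

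For item 4, I apply the lift from Lemma \ref{lem:include} to any sequence of sparse optimizers $(\mathbf{z}_1^{*,t},\dots,\mathbf{z}_p^{*,t})$ to obtain a point $\mathbf{z}^{*,t}$ feasible for \cref{equ:dense relax}; by Theorem \ref{thm:nie}(4) its truncations $\mathbf{z}^{*,t}|_{2s}$ are bounded with flat accumulation points. Boundedness transfers back to each sparse coordinate because $M_t(\mathbf{z}_k)\succeq 0$ together with the sphere localizer makes every sparse moment on $V_k$ a bounded linear functional of the corresponding dense moments. Any accumulation point $(\mathbf{z}_1^{*,\infty},\dots,\mathbf{z}_p^{*,\infty})$ is feasible for \cref{equ:sparse GMP} at the truncated level; applying the Curto–Fialkow flat extension clique-wise on each $V_k$ yields atomic measures $\mu_k$ on $K_k$, which is multi-flatness in the sense of \cref{equ:multi-rank}.

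The main obstacle is item 5, transporting finite convergence from the dense to the sparse hierarchy. My plan is to take the flat dense optimizer $\mathbf{z}^{*,t}$ guaranteed by the hypothesis, extract its finitely atomic representing measure $\mu=\sum_i\lambda_i\delta_{\mathbf{u}^i}$ on $K$ through Theorem \ref{thm:flat}, and split it along the disjoint sets $\Lambda_k=\{\mathbf{x}\in K:\mathrm{Supp}(\mathbf{x})\subseteq V_k,\,\mathrm{Supp}(\mathbf{x})\not\subseteq V_h\text{ for }h<k\}$ from the proof of Theorem \ref{thm:equal1}. Restricting $\mu$ to $\Lambda_k$ and projecting onto the $V_k$-indexed coordinates produces atomic measures $\mu_k$ on $K_k$; the resulting truncated moments $\mathbf{z}^{*,t}_k$ are feasible for \cref{equ:sparse relax} by a computation parallel to that of Theorem \ref{thm:equal1}, they achieve $\sum_k\langle F|_{V_k},\mathbf{z}^{*,t}_k\rangle=\langle F,\mathbf{z}^{*,t}\rangle=\xi_t=val=val^{isp}\ge\xi_t^{isp}$, and combined with $\xi_t\le\xi_t^{isp}$ from Lemma \ref{lem:include} they are therefore optimal. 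Finally, each $\mu_k$ being finitely atomic on $K_k$ delivers the multi-flatness condition \cref{equ:multi-rank} at some $s_k\le t$, closing the argument.
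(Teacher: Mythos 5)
Your overall architecture matches the paper's: items 1--2 via \cref{lem:include} and \cref{thm:nie}, and item 5 by splitting the atoms of the flat dense optimizer along the disjoint sets $\Lambda_k$ exactly as in the proof of \cref{thm:equal1}. But three steps have genuine gaps. First, in item 4 you invoke \cref{thm:nie}(4) for the lifted sequence $\mathbf{z}^{*,t}$, yet that statement applies to \emph{optimizers} of the dense relaxation \cref{equ:dense relax}, while your lifted point is only feasible with objective value $\xi_t^{isp}\ge\xi_t$; the citation is not licit as stated. The paper instead bounds each $\mathbf{z}_k^{*,t}$ directly (from $F|_{V_k}-\epsilon\in\Sigma[\mathbf{x}(V_k)]_d$ one gets $(\mathbf{z}_k^{*,t})_0\le val^{isp}/\epsilon$, and the sphere localizer plus $M_t(\mathbf{z}_k^{*,t})\succeq 0$ then controls $\|\mathbf{z}_k^{*,t}|_{2s}\|$ by a trace bound), passes to a subsequential limit, shows the limit measure is a minimizer of \cref{equ:sparse GMP} and hence of \cref{equ:dense GMP}, and only then uses genericity of $F$ (uniqueness of the flat minimizer from \cite{nie2014truncated}) to conclude flatness. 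Your ``transfers back to each sparse coordinate'' remark also needs an argument: the lift sums over cliques, so boundedness of the sum bounds each summand only because the relevant diagonal moments are nonnegative. Second, in item 5 the assertion that finite atomicity of each $\mu_k$ ``delivers'' the flatness condition \cref{equ:multi-rank} is not automatic; the paper closes this with a pigeonhole on the rank chain $rank\,M_0(\mathbf{z}_k^{*,t})\le rank\,M_{d_K}(\mathbf{z}_k^{*,t})\le\cdots\le R$, which forces two consecutive ranks to coincide once $t\ge(R+1)d_K$. Without that step you have optimality but not flatness.

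Third, a smaller but real issue in item 3: the certificate $F|_{V_k}-\epsilon_k\sum_{|\bm{\beta}|\le t}\mathbf{x}(V_k)^{2\bm{\beta}}\in\Sigma[\mathbf{x}(V_k)]_d$ is degree-inconsistent once $2t>d$, so the coercivity bound as written does not follow from \cref{assump:F}; you would need to route the control of higher-degree moments through the sphere constraint, as the paper does in item 4. The paper avoids this entirely in item 3 by a different and cleaner mechanism: $F|_{V_k}\in int(\Sigma[\mathbf{x}(V_k)]_d)$ makes the zero tuple strictly feasible for the dual \cref{equ:sparse dual}, so Slater's condition gives strong duality and attainment of the primal optimum. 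Your Weierstrass route is salvageable, but as written it does not yet constitute a proof.
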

 
\begin{proof}
		1. If \cref{equ:sparse relax} is infeasible for some $t$, suppose that $\mathcal{A}$ is completely positive, then \cref{equ:dense GMP} is feasible. Since \cref{equ:sparse relax} is a relaxation of \cref{equ:dense ideal}, \cref{equ:sparse relax} is also feasible, which contradicts.
		
		2. If $\mathcal{A}$ is not completely positive, then \cref{equ:dense relax} is infeasible for all $t$ big enough from \cref{thm:nie}. Suppose that \cref{equ:sparse relax} has a feasible point $(\mathbf{z}_1,\cdots,\mathbf{z}_p)$. Define $\mathbf{z}\in\mathbb{R}^{\mathbb{N}^n_{2t}}$ by setting $\mathbf{z}_{\bm{\alpha}}=\sum_{k=1}^p\langle \mathbf{x}^{\bm{\alpha}}|_{V_k}, \mathbf{z}_k\rangle$, then $\mathbf{z}$ is feasible for \cref{equ:dense relax} according to \cref{lem:include}. It contradicts the assumption. Therefore, \cref{equ:sparse relax} should also be infeasible for all $t$ big enough.
		
		3. From 1, \cref{equ:sparse relax} has a feasible point $(\mathbf{z}_1,\cdots,\mathbf{z}_p)$ when $\mathcal{A}$ is completely positive. Since $F|_{V_k}\in int(\Sigma[\mathbf{x}(V_k)]_d)$, the tuple of zero polynomials is a strictly feasible point of the dual problem \cref{equ:sparse dual} of \cref{equ:sparse relax}. That is, the Slater condition is satified. Hence, the strong duality holds and \cref{equ:sparse relax} has an optimizer $(\mathbf{z}_1^{*,t},\cdots,\mathbf{z}_p^{*,t})$ for every $t\ge d/2$.
		
		4. Since $F|_{V_k}\in int(\Sigma[\mathbf{x}(V_k)]_d)$, there exists $\epsilon>0$ such that $F|_{V_k}-\epsilon\in\Sigma[\mathbf{x}(V_k)]_d $. So, we have
		\[0\le\langle F|_{V_k}-\epsilon, \mathbf{z}_k^{*,t}\rangle=\langle F|_{V_k}, \mathbf{z}_k^{*,t}\rangle- \epsilon\langle 1, \mathbf{z}_k^{*,t}\rangle.\]
		It implies that
		\[(\mathbf{z}_k^{*,t})_0=\langle 1, \mathbf{z}_k^{*,t}\rangle\le\langle F|_{V_k}, \mathbf{z}_k^{*,t}\rangle/\epsilon\le \sum_{k=1}^p \langle F|_{V_k}, \mathbf{z}_k^{*,t}\rangle/\epsilon=\xi_t^{isp}/\epsilon\le val^{isp}/\epsilon.\]
		Since $L_{\mathbf{x}^T(V_k)\mathbf{x}(V_k)-1}^{(t)}(\mathbf{z}_k^{*,t})=0$, $\|\mathbf{x}(V_k)\|^{2s}\in \Sigma[\mathbf{x}(V_k)]_d$, we have
		\[\langle \|\mathbf{x}(V_k)\|^{2s}, \mathbf{z}_k^{*,t}\rangle=\langle \|\mathbf{x}(V_k)\|_2^{2s-2}, \mathbf{z}_k^{*,t}\rangle=(\mathbf{z}_k^{*,t})_0,~s=1,\cdots,t.\]
		Since $M_t(\mathbf{z}_k^{*,t})\succeq 0$, for each $s=0,1,\cdots,t$ we have
		\begin{equation*}
			\begin{aligned}
				\|\mathbf{z}_k^{*,t}|_{2s}\|&\le \|M_s({\mathbf{z}_k^{*,t}})\|_F\le tr(M_s({\mathbf{z}_k^{*,t}}))=\sum_{i=0}^s\sum_{|\bm{\alpha}|=i}(\mathbf{z}_k^{*,t})_{2\bm{\alpha}}=\sum_{i=0}^s\sum_{|\bm{\alpha}|=i}\langle \mathbf{x}(V_k)^{2\bm{\alpha}},\mathbf{z}_k^{*,t}\rangle\\
				&\le\sum_{i=0}^s\langle \|\mathbf{x}(V_k)\|^{2i},\mathbf{z}_k^{*,t}\rangle\le \sum_{i=0}^s(\mathbf{z}_k^{*,t})_0\le (s+1)val^{isp}/\epsilon\le (s+1)C/\epsilon,  
			\end{aligned}
		\end{equation*}
	where $\bm{\alpha}\in\mathbb{N}^{|V_k|}_t$ and $C$ is the objective value of \cref{equ:sparse GMP} with the feasible value corresponding to $\mathcal{A}$.
	Hence, the sequence $\{(\mathbf{z}^{*,t}_1|_{2s},\cdots,\mathbf{z}^{*,t}_p|_{2s})\}_t$ is bounded, and so is $\{\mathbf{z}^{*,t}|_{2s}\}_t$. They must have convergent subsequences. We can generally assume $\{(\mathbf{z}^{*,t}_1|_{2s},\cdots,\mathbf{z}^{*,t}_p|_{2s})\}_t$ converges to $(\bm{\omega}_1,\cdots,\bm{\omega}_k)$ and $\{\mathbf{z}^{*,t}|_{2s}\}_t$ converges to $\bm{\omega}$. 
	
	According to the proof of \cite[Theorem 5.3(i)]{nie2014truncated}, there exists $(\mathbf{z}^*_1,\cdots,\mathbf{z}^*_p)$ such that
	$\mathbf{z}^*_k|_{2s}=\bm{\omega}_k$ and $\mathbf{z}^*_k$ admits a measure $\mu_k^*$ supported in $K_k$. Hence, $(\mu_1^*,\cdots,\mu_p^*)$ is feasible for \cref{equ:sparse GMP}, and
	\begin{equation*}
		\begin{aligned} 
	val^{isp}&\le \sum_{k=1}^p \int F|_{V_k}d\mu^*_k=\sum_{k=1}^p\langle F|_{V_k},\mathbf{z}^*_k|_d\rangle=\sum_{k=1}^p\langle F|_{V_k},\bm{\omega}_k|_{d}\rangle\\
	&=\lim_{k\rightarrow\infty}\sum_{k=1}^p\langle F|_{V_k},\mathbf{z}^{*,t}_k\rangle\le val^{isp}.
	\end{aligned} 
	\end{equation*}
	Therefor, $(\mu_1^*,\cdots,\mu_p^*)$ is a minimizer of \cref{equ:sparse GMP}. Then, there exists $\mathbf{z}^{*}$ such that $\mathbf{z}^*|_{2s}=\bm{\omega}$. And it admits a measure $\mu^*$ supported in $K$, given by $\int fd\mu^*=\sum_{k=1}^p\int_{K_k}f|_{V_k}d\mu_k^*$ for any measurable function $f$ on $\mathbb{R}^n$. $\mu_*$ is also a minimizer of \cref{equ:dense GMP} from \cref{thm:equal1}. Through \cite[Proposition 5.2]{nie2014truncated} and \cite[lemma 3.5]{nie2014truncated}, \cref{equ:truncated} has a unique minimizer admiting the finitely atomic measure when $F$ is generic in $\Sigma[\mathbf{x}]_d$, which must be $\bm{\omega}|_d$. Again from the proof of \cite[Theorem 5.3(i)]{nie2014truncated}, $\bm{\omega}$ is flat for $s$ big enough .
	
	5. Suppose that $\mathbf{z}^{*,\hat{t}}$ is a minimizer of \cref{equ:dense relax} and $\mathbf{z}^{*,\hat{t}}|_{2s}$ is flat for some $s\ge d/2$. Then $\mathbf{z}^{*,\hat{t}}|_{2s}$ admits a unique finitely atomic measure $\mu=\sum_{r=1}^R\lambda_r\delta_{\mathbf{v}_r}$ supported in $K$ by \cite[Theorem 2.2]{nie2014truncated}, with $R=rank~ M_s(\mathbf{z}^{*,\hat{t}}|_{2s})$. It implies that $\mathcal{A}=\sum_{r=1}^R\lambda_r(\mathbf{v}_r)^{\otimes m}$. For each $\mathbf{v}_r$, $\text{Supp}(\mathbf{v}_r)\subseteq V_k$ for some $k\in[p]$ by \cref{lem:clique decomposition}. Let $$\mu_k=\sum_{s\in S_k}\lambda_s\delta_{\mathbf{v}_s(V_k)},$$
	where 
	\[S_k=\{s:\text{Supp}(\mathbf{v}_s)\subseteq V_k, \text{Supp}(\mathbf{v}_s)\not\subseteq V_h \text{ for }1\le h\le k-1\}~k\in[p],\]
	then $(\mu_1,\cdots,\mu_p)$ is feasible for \cref{equ:sparse GMP}. And for any $t\ge \hat{t}$, $(\mathbf{z}^{*,t}_1,\cdots,\mathbf{z}^{*,t}_p)$ is feasible for \cref{equ:sparse relax}, where $$\mathbf{z}^{*,t}_k=\int[\mathbf{x}(V_k)]_{2t}d\mu_k=\sum_{s\in S_k}\lambda_s[\mathbf{v}_s(V_k)]_{2t}.$$ Moreover, the objective values of \cref{equ:dense relax} and \cref{equ:sparse relax} are equal to $val$ in this case. Thus, $(\mathbf{z}^{*,t}_1,\cdots,\mathbf{z}^{*,t}_p)$ is a minimizer of \cref{equ:sparse relax} from \cref{lem:include} and each $\mathbf{z}^{*,t}_k$ admits a finitely atomic measure $\mu_k$ for $t\ge \hat{t}$. 
	
	We now prove that if $t\ge (R+1)d_K$, then $(\mathbf{z}_1^{*,t},\cdots,\mathbf{z}_p^{*,t})$ satisfies the flatness condition \cref{equ:multi-rank}. Since
	\begin{equation*}
		\begin{aligned}
	rank~ M_0(\mathbf{z}_k^{*,t})&\le rank~ M_{d_K}(\mathbf{z}_k^{*,t})\le\cdots\le rank~ M_{(R+1)d_K}(\mathbf{z}_k^{*,t})\\
	&\le |\text{Supp}(\mu_k)|\le R,
	\end{aligned}
	\end{equation*}
there exists $l\ge R+1$, such that $rank~ M_{(l-1)d_K}(\mathbf{z}_k^{*,t})=rank~ M_{ld_K}(\mathbf{z}_k^{*,t})$ for $k\in[p]$. Hence, the flatness condition \cref{equ:multi-rank} holds.
\end{proof}
\begin{remark}
	In our numerical experiments, the finite convergence is observed.
\end{remark}

\section{Numeraical experiments}\label{sec:numerical}
In this section, we present numerical experiments that apply the ideal-sparse generalized moment problem reformulation to solve the completely positive tensor decomposition problem. We first explore the behaviour of the maximal clique generation \cref{alg:clique} on randomly generatred sparse tensors. Then dense and sparse moment relaxations are applied to different tensors from the literature, to illustrate the superiority of the ideal-sparse hierarchies compared to the dense ones. A comparison with TSSOS \cite{wang2021tssos,wang2022cs} exploiting both correlative and term sparsity in the dual problem \cref{equ:dense dual} is also conducted.

All computations were implemented on a personal computer running Windows 11 with Gen Intel Core i7 at 2.3GHz and 32GB memory. The ideal-dense and sparse model were implemented in Julia \cite{bezanson2017julia} utilizing MOSEK to slove the semidefinite programming (SDP).
\subsection{Check randomly generated sparse tensor by \cref{alg:clique}}\label{subsec:clique} 
According to \cref{thm:necessary}, before utilizing the moment relaxation method, one can first employ \cref{alg:clique} to check whether a symmetric tensor is completely positive, especially when it is sparse. We first describe how we construct random sparse symmetric tensors. Given integers $n\in\mathbb{N}$ and $d\in\mathbb{N}$. Denote the non-zero density by $nzd$, since the maximal cliques generated by \cref{alg:clique} only determined by the position of zero entries, we create a symmetric binary tensor $\mathcal{A}\in S^m(\mathbb{R}^n)$ with the diagonal elements are ones. And there are $\lceil{nzd\cdot ({n+m-1 \choose m}-n)}\rceil+n$ ones in $\mathbf{w}$, given by $w_{\bf{\alpha}}=\mathcal{A}_{i_1,\cdots,i_m}$, whose positions are selected uniformly at random.  

We generate such random examples for varying size $(n=10,12,14 \text{ and }m=4,6,8)$. The non-zero density varies from 0.4 to 0.98. And 5 examples are generated per size and nzd value. The maximal computation time of checking the necessary condition by \cref{alg:clique} is shown in \cref{tab:clique}. For clarity, we only keep the decimal digits according to the order of magnitude. Interestingly, most of the sparse tensors randomly generated in our tests are not completely positive and can be distinguished solely through \cref{alg:clique}. An intuitive explanation might be due to the zero-dominace property. Since for a completely positive tensor $\mathcal{A}$, if $\mathcal{A}_{i_1,\cdots,i_m}\neq 0$, then any $\mathcal{A}_{j_1,\cdots,j_m}$ with $[\{j_1,\cdots,j_m\}]\subseteq [\{i_1,\cdots,i_m\}]$ should not be zero. This is hard to satisfy in a random generated sparse symmetric tensor.

\cref{tab:clique} indeed shows the efficiency of \cref{alg:clique} and its advatage in distinguishing sparse tensors that are not completely positive. Another observation is that the computation time grows as the dimension and order increase. However, the growth rate is more acceptable than that of the dense moment relaxations.
\begin{table}[H]\label{tab:clique}
	\caption{Check the necessary condition of \cref{thm:necessary} by \cref{alg:clique}}
	\begin{center}
	\begin{tabular}{cc|cll|cll|cll}
		\hline
		\multirow{2}{*}{$p$}     & $n$                   & \multicolumn{3}{c|}{10}                                                            & \multicolumn{3}{c|}{12}                                                         & \multicolumn{3}{c}{14}                                                         \\ \cline{2-11} 
		& $d$                   & 4                          & \multicolumn{1}{c}{6}     & \multicolumn{1}{c|}{8}    & 4                         & \multicolumn{1}{c}{6}    & \multicolumn{1}{c|}{8}   & 4                         & \multicolumn{1}{c}{6}    & \multicolumn{1}{c}{8}   \\ \hline
		0.4                      &                       & 0.0006                     & \multicolumn{1}{c}{0.005} & \multicolumn{1}{c|}{0.05} & 0.001                     & \multicolumn{1}{c}{0.01} & \multicolumn{1}{c|}{0.1} & 0.002                     & \multicolumn{1}{c}{0.07} & \multicolumn{1}{c}{0.4} \\
		\multicolumn{1}{l}{0.8}  & \multicolumn{1}{l|}{} & \multicolumn{1}{l}{0.0005} & 0.005                     & 0.03                      & \multicolumn{1}{l}{0.001} & 0.01                     & 0.1                      & \multicolumn{1}{l}{0.003} & 0.07                     & 0.3                     \\
		\multicolumn{1}{l}{0.98} & \multicolumn{1}{l|}{} & \multicolumn{1}{l}{0.0008} & 0.006                     & 0.03                      & \multicolumn{1}{l}{0.003} & 0.05                     & 0.2                      & \multicolumn{1}{l}{0.007} & 0.07                     & 0.4                     \\ \hline
	\end{tabular}
\end{center}
\end{table}

\subsection{Comparison with the dense model}
The following example is presented to see whether the proposed hierarchies are able to detect that the tensor is not completely positive. This can be achieved when the solver returns an infeasibility certificate.
\begin{example}\label{exa:noncp}
	 We will consider the following two tensors that are known to be symmetric nonnegative by not completely positive:	
	\begin{enumerate}[(1)]
		\item non\_ex1: $\mathcal{A}\in S^3(\mathbb{R}^{11})$ given as Example 4.1 of Fan et al. \cite{fan2017semidefinite}.
		\item non\_ex2: $\mathcal{A}\in S^5(\mathbb{R}^8)$ given as Example 4.2 of Fan et al. \cite{fan2017semidefinite}.
	\end{enumerate}
\end{example}
The numerical results for \cref{exa:noncp} are presented in \cref{tab:noncp}. We also list the tensor dimension $n$, order $m$ and relaxation level $t$ in the table. Here ``Gloptipoly" represents the implementaion of the dense moment relaxations in MATLAB 2022a using the software GlotiPoly 3 \cite{henrion2009gloptipoly} with SeDuMi as the SDP solver, which is the same as Fan et al. \cite{fan2017semidefinite}. $\xi_t$ and $\xi_t^{isp}$ denote the dense and ideal-sparse moment relaxation of level $t$. Since solving the SDP relaxation takes up the dominant computation time, we only show the times solving the SDP relaxation. The total CPU times for the ideal-sparse method including the modeling process are recorded in \cref{tab:tssos}.

From the results, we observe that ideal-sparse moment relaxations perferom much better than the dense ones as it returns the infeasibility certificate using less run time.
\begin{table}[H]\label{tab:noncp}
	\caption{Solving the SDP of the dense and ideal-sparse moment relaxations for \cref{exa:noncp}}
	\begin{center}
	\begin{tabular}{lllllll}
		\hline
		$\mathcal{A}$ & $n$ & $m$ & $t$ & Gloptipoly      & $\xi_t$ & $\xi_t^{isp}$  \\ \hline
		non\_ex1      & 11  & 3   & 2   & 1.00      & 1.15   &$< 0.01$ \\
		non\_ex2      & 8   & 5   & 3   & 5.1515 & 66.87 & $< 0.01$ \\ \hline
	\end{tabular}
\end{center}
\end{table}
\begin{example}\label{exa:cp}
	We will consider the following seven example tensors, which are known to be completely positive in the literature:
	\begin{enumerate}[(1)]
		\item ex1: \cref{exa:example}
		\item ex2: $\mathcal{A}\in S^3(\mathbb{R}^{10})$ given as Example 4.3 of Fan et al. \cite{fan2017semidefinite}.
		\item ex3: $\mathcal{A}\in S^4(\mathbb{R}^{10})$ given as Example 4.4 of Fan et al. \cite{fan2017semidefinite}.
		\item ex4: $\mathcal{A}\in S^3(\mathbb{R}^{10})$ given as the $m=3,~n=10$ case (1) in Example 2 of \cite{qi2014nonnegative}.
		\item ex5: $\mathcal{A}\in S^3(\mathbb{R}^{10})$ given as the $m=3,~n=10$ case (2) in Example 2 of \cite{qi2014nonnegative}.
		\item ex6: $\mathcal{A}\in S^4(\mathbb{R}^{10})$ given as the $m=4,~n=10$ case (2) in Example 2 of \cite{qi2014nonnegative}.
		\item ex7: $\mathcal{A}\in S^4(\mathbb{R}^{10})$ given as the $m=4,~n=10$ case (3) in Example 2 of \cite{qi2014nonnegative}.
	\end{enumerate}
\end{example}
We first show in \cref{tab:mc} the maximal cliques (denoted by mc in the table) of support multi-hypergraph $G=([n],\mathbb{E})$ the completely positive tensors in \cref{exa:cp} generated by \cref{alg:clique}. The correctness of the generated maximal cliques can be validated through the consistency with support sets of decomposition vectors given in literature.

It also illustrates that the positive semidifinite (PSD) matrix size in the ideal-sparse model can be significantly smaller than in the dense one. For instance, if we formulate a dense model for ex7 with relaxation order $t_0=\lceil(m+1)/2\rceil=3$, than the maximal PSD matrix size is ${n+t_0 \choose t_0}\times{n+t_0 \choose t_0}$, that is a 286 by 286 matrix. While with the ideal-sparsity, the maximal PSD matrix size involved is reduced to ${4+t_0\choose t_0}\times {4+t_0\choose t_0}$, that is a 35 by 35 matrix. Although there are seven maximal cliques, which lead to more matrices in the ideal-sparse moment relaxation, the problem scale is still smaller than the dense one. 
\begin{table}[H]\label{tab:mc}
	\caption{The maximal cliques of tensors in \cref{exa:cp}}
	\begin{center}
	\begin{tabular}{llll}
		\hline
		$\mathcal{A}$ & $n$ & $m$ & mc                                                                                            \\ \hline
		ex1           & 3   & 3   & \{1,2\},\{1,3\}                                                                           \\
		ex2           & 10  & 3   & \{2,3,8\},\{2,4,5\},\{3,4,5\},\{3,7,8\},\{4,7,9\},\{4,10\},\{8,9,10\}           \\
		ex3           & 10  & 4   & \{1,10\},\{2,4,8,9\},\{5,7,9\},\{6,7,9\},\{6,10\},\{8,9,10\}                      \\
		ex4           & 10  & 3   & \{1,5\},\{2,3\},\{2,6,9\},\{2,8,10\},\{3,4,5\},\{5,9\},\{7,9,10\}               \\
		ex5           & 10  & 3   & \{1,5,10\},\{2,3,9\},\{2,9,10\},\{2,8\},\{3,4,8\},\{5,8\}                         \\
		ex6           & 10  & 4   & \{1,2,7\},\{1,3,8,9\},\{1,5\},\{2,6,7\},\{2,3,6\},\{4,9\},\{7,9,10\}            \\
		ex7           & 10  & 4   & \{1,5,9,10\},\{1,5,6,9\},\{1,5,6,8\},\{2,5,6,9\},\{3,4,9\},\{3,9,10\},\{5,8,9\} \\ \hline
	\end{tabular}
\end{center}
\end{table}

The SDP computation times for \cref{exa:cp} are presented in \cref{tab:cp}, denoted by `SDP (s)' in the table. The total CPU times of \cref{exa:cp} for the ideal-sparse method including the modeling process are also recorded in \cref{tab:tssos}. `-' stands for that MOSEK throws an out of space error during solving the SDP. The superiority of the ideal-sparse model is also confirmed. 

\begin{table}[H]\label{tab:cp}
	\caption{Comparison with the dense model for \cref{exa:cp}}
	\begin{tabular}{llllllllll}
		\hline
		$\mathcal{A}$ & $n$ & $m$ & $t$ & Gloptipoly    &          & $\xi_t^{cp}$  &          & $\xi_t^{isp}$   &          \\ \cline{5-10} 
		&     &     &     & SDP (s) & Flat(1) & SDP (s) & Flat(1) & SDP (s)   & Flat(2) \\ \hline
		ex1           & 3   & 3   & 2   & 0.03          & True     & 0.01          & True     & \textless{}0.01 & True     \\
		ex2           & 10  & 3   & 2   & 1.86          & False    & 1.36          & False    & 0.03            & True     \\
		ex3           & 10  & 4   & 3   & 383.00        & True     & -             & -        & 0.25            & True     \\
		ex4           & 10  & 3   & 2   & 1.16          & False    & 0.63          & False    & 0.02            & True     \\
		ex5           & 10  & 3   & 2   & 2.41          & False    & 1.32          & False    & 0.03            & True     \\
		ex6           & 10  & 4   & 3   & 943.48        & True     & -             & -        & 0.35            & True     \\
		ex7           & 10  & 4   & 3   & 582.42        & True     & -             & -        & 0.82            & True     \\ \hline
	\end{tabular}
\end{table}

Here we also verified whether the flatness conditions \cref{equ:rank condition} and \cref{equ:multi-rank} hold for solutions computed from $\xi_t$ and $\xi_t^{isp}$, respectively. The results are provided in \cref{tab:cp} denoted by `Flat(1)' and `Flat(2)'. Note that since we used SVD to obtain the numerical rank of matrices, it might be affected by the tolerance or the random generated SOS polynomial in the objective function. As mentioned in \cite{henrion2005detecting}, one may try to extract the solution even if flatness conditions does not hold. For examples in \cref{tab:cp} satisfying the flatness conditions, we construct completely positive tensors $\mathcal{A}_{rec}$ using the extracted solution of the ideal-sparse model. In all cases but ex6, when compare to the original tensor $\mathcal{A}$, we obtain $\|\mathcal{A}_{rec}-\mathcal{A}\|_1\le 10^{-5}$. For ex6, we can obtain the same accuracy if the relaxation level $t$ is increased to 4.

Nonetheless, we remark that the decomposition factors extracted from the ideal-sparse moment relaxations may consist of repeated vectors, since the support sets of different maximal cliques may have common nodes. For instance, Fan et al. \cite{fan2017semidefinite} has given a completely positive decomposition of length 20 for ex3 as presented in \cref{tab:fan}. The v-row is the non-zero value of each decomposition vector and the p-row is the position of non-zero value. The decomposition obtained from our ideal-dense model is given in \cref{tab:decomposition}. The mc-row tells which clique the vector is extracted from. Moreover, both decompositions are consistent with \cref{lem:clique decomposition}.
\begin{table}[H]\label{tab:fan}
	\caption{The completely positive decomposition for ex3 given by Fan et al. \cite{fan2017semidefinite}}
	\begin{center}
		\setlength{\tabcolsep}{1mm}
	\begin{tabular}{|l|l|l|l|l|l|l|l|l|l|l|}
		\hline
		v & 1.0000  & 1.0000   & 1.0000   & 1.0000  & 1.0000  & 0.7071 & 0.7071 & 0.7071 & 1.0000     & 1.0000     \\ \hline
		p & 6      & 4      & 7      & 2      & 5 & 2,4    & 6,10   & 4,8    & 10      & 8      \\ \hline
		v & 0.7071 & 0.7071 & 0.7071 & 0.5774 & 1.0000  & 0.7071 & 0.5774 & 0.7071 & 0.5000  & 0.5774 \\ \hline
		p & 1,10   & 4,9    & 2,8    & 6,7,9  & 9 & 2,9    & 5,7,9  & 8,9    & 2,4,8,9 & 8,9,10 \\ \hline
	\end{tabular}
\end{center}
\end{table}
\begin{table}[H]\label{tab:decomposition}
	\caption{The completely positive decomposition for ex3 given by the ideal-sparse model}
	\begin{center}
		\setlength{\tabcolsep}{0.5mm}
	\begin{tabular}{|l|l|l|l|l|l|l|l|l|l|}
		\hline
		mc & \{1,10\}    & \{1,10\}    & \{2,4,8,9\} & \{2,4,8,9\} & \{2,4,8,9\} & \{2,4,8,9\} & \{2,4,8,9\} & \{2,4,8,9\} & \{2,4,8,9\} \\ \hline
		v  & 0.7071        & 1.0000        & 0.7071        & 0.7071        & 0.7071        & 1.0000        & 0.7071        & 1.0000        & 0.7071        \\ \hline
		p  & 1,10          & 10            & 2,4           & 2,9           & 4,9           & 4             & 2,8           & 2             & 8,9           \\ \hline
		mc & \{2,4,8,9\} & \{2,4,8,9\} & \{2,4,8,9\} & \{2,4,8,9\} & \{5,7,9\}   & \{5,7,9\}   & \{5,7,9\}   & \{5,7,9\}   & \{6,7,9\}   \\ \hline
		v  & 0.7071        & 1.0000        & 1.0000        & 0.5000        & 0.5774        & 1.0000        & 1.0000        & 1.0000        & 1.0000        \\ \hline
		p  & 4,8           & 8             & 9             & 2,4,8,9       & 5,7,9         & 9             & 5             & 7             & 9             \\ \hline
		mc & \{6,7,9\}   & \{6,7,9\}   & \{6,7,9\}   & \{6,10\}    & \{6,10\}    & \{6,10\}    & \{8,9,10\}  & \{8,9,10\}  & \{8,9,10\}  \\ \hline
		v  & 0.5774        & 1.0000        & 1.0000        & 1.0000        & 1.0000        & 0.7071        & 1.0000        & 1.0000        & 1.0000        \\ \hline
		p  & 6,7,9         & 6             & 7             & 6             & 10            & 6,10          & 10            & 9             & 8             \\ \hline
		mc & \{8,9,10\}  & \{8,9,10\}  &               &               &               &               &               &               &               \\ \hline
		v  & 0.7071        & 0.5774        &               &               &               &               &               &               &               \\ \hline
		p  & 8,9           & 8,9,10        &               &               &               &               &               &               &               \\ \hline
	\end{tabular}
\end{center}
\end{table}

\subsection{Comparison with TSSOS}
Korda et al. \cite{korda2024exploiting} have discussed the relationship between the ideal-sparsity and correlative sparsity, while the term sparsity is not completely clear for a given generalized moment problem. Here we conduct come preliminary experiments to show that the ideal-sparse generalized moment problem reformulation indeed brings unique benefits that differ from exploiting correlative and term sparsity in the dual SOS relaxations \cref{equ:dense dual} of the dense moment relaxations \cref{equ:dense relax}. 

The computation times of $\xi_t^{isp}$ solving the ideal-sparse moment relaxations and TSSOS solving the dual SOS relaxations \cref{equ:dense dual} are presented in \cref{tab:tssos}. `Times (s)' is the total computation time including the modeling process. In our tests, TSSOS is used with option {TS=``block"} and {CS=``MF"}, see \cite{wang2021tssos,wang2022cs} for more details. From \cref{tab:tssos}, we can observe that the ideal-sparse reformulation is indeed highly efficient in terms of the completely positive tensor decomposition problem. We leave it for further research to combine the three kinds of sparsities and solve the dual problem \cref{equ:sparse dual} of the ideal-sparse moment relaxations \cref{equ:sparse relax} using TSSOS, which we will not discuss more within this article.
\begin{table}[H]\label{tab:tssos}
	\caption{Comparison with TSSOS}
	\begin{center}
	\begin{tabular}{lllllll}
		\hline
		$\mathcal{A}$ & n  & d & \multicolumn{2}{l}{TSSOS}  & \multicolumn{2}{l}{$\xi_t^{isp}$} \\ \cline{4-7} 
		&    &   & SDP (s)    & Times (s) & SDP (s)        & Times (s)    \\ \hline
		ex1           & 3  & 3 & \textless{}0.01 & 0.01     & \textless{}0.01     & 0.01        \\
		ex2           & 10 & 3 & 0.40            & 0.45     & 0.03                & 0.05        \\
		ex3           & 10 & 4 & 56.09           & 57.50    & 0.25                & 1.11        \\
		ex4           & 10 & 3 & 0.26            & 0.33     & 0.02                & 0.04        \\
		ex5           & 10 & 3 & 0.69            & 0.76     & 0.03                & 0.06        \\
		ex6           & 10 & 4 & 58.38           & 59.94    & 0.35                & 1.42        \\
		ex7           & 10 & 4 & 59.19           & 60.86    & 0.82                & 2.69        \\
		non\_ex1      & 11 & 3 & 0.29            & 0.37     & \textless{}0.01     & 0.02        \\
		non\_ex2      & 8  & 5 & 1.76            & 2.12     & \textless{}0.01     & 0.03        \\ \hline
	\end{tabular}
\end{center}
\end{table}

\section{Conclusions}\label{sec:conclusion}
We consider completely positive tensor decomposition problems with ideal-sparsity. The maximal cliques of a support multi-hypergraph associated with completely positive tensors is introduced to construct the ideal-sparse generalized moment problem reformulation. We also propose an algorithm to generate all such maximal cliques and give a necessary condition for tensors to be completely positive. The ideal-sparse moment relaxations are applied to solve the new reformulation, for which asymptotic convergence of the ideal-sparse moment hierarchy is provided under some general assumptions. Moreover, the finite convergence is guranteed if the dense moment hierarchy is finite convergent. Numerical examples are presented to show the superiority of our new approach.

There are several further reseasrch directions that are left. First, there are various improvement methods for the maximum clique algorithm of graph in the literature. Can they be applied to the maximal cliques generation algorithm for multi-hypergraphs? Second, as mentioned in \cref{thm:maximal sets}, our proposed \cref{alg:clique} indeed finds the maximal subsets that do not contain any set $\{i_1,\cdots,i_m\}$ with $x_{i_1}\cdots x_{i_m}=0$. One would naturally expect to find other applications of ideals generated by a set of monomials $\prod_{i\in S} x_i(S\in\mathcal{S})$ where $\mathcal{S}$ is a given collection of subsets of $[n]$. In particular, it is worthwhile to exploit the ideal-sparsity in non-negative decomposition problems of non-symmetric tensors \cite{cichocki2009nonnegative} and comlex completely positive tensors \cite{zhou2020completely}. Third, combining the correlative and term sparsity with the dual problem of ideal-sparse moment relaxations may lead to further improvements. We leave it for future work.

\section*{Acknowledgments} The authors would like to thank Dr. Jie Wang for his help with utilizing TSSOS.

\bibliographystyle{siamplain}
\bibliography{references}
\end{document}